\newtheorem{lemma}{Lemma}
\newtheorem{theorem}{Theorem}
\newtheorem{proposition}{Proposition}
\newenvironment{proof}{\noindent{\bf Proof:}}{\hfill\fbox{}\vspace*{1mm}}
\numberwithin{equation}{section}
\numberwithin{lemma}{section}
\numberwithin{theorem}{section}
\begin{document}
\title{A novel fourth-order scheme for two-dimensional Riesz space fractional nonlinear reaction-diffusion equations and its optimal preconditioned solver
\thanks{The work of Wei Qu was supported by the research grants 2020A1515110454 from Guangdong Basic and Applied Basic Research Foundation, and 2021KCXTD052 from the Scientific Computing Research Innovation Team of Guangdong Province. The work of Sean Hon was supported in part by the Hong Kong RGC under grant 22300921 and a start-up grant from the Croucher Foundation. The work of Siu-Long Lei was supported by the research grants MYRG2022-00262-FST and MYRG-GRG2023-00181-FST-UMDF from University of Macau.
}}
\setcounter{footnote}{-1}
\author{
Wei Qu$^{1}$,\, Yuan-Yuan Huang$^{2}$,\, Sean Hon$^{2}$,\, Siu-Long Lei$^{3}$\footnote{Correspondence: {\tt sllei@um.edu.mo}}
\\
\\
1.School of Mathematics and Statistics, Shaoguan University, Shaoguan, China.\\
2.Department of Mathematics, Hong Kong Baptist University, Kowloon Tong, \,\,\\
Hong Kong SAR, China.\\
3.Department of Mathematics, University of Macau, Macau SAR, China.\,\,\,\,\,\,\,\,\,\,\,\,\,\,\,\,\,\,}

\date{}
\maketitle

\begin{abstract}
A novel fourth-order finite difference formula coupling the Crank-Nicolson explicit linearized method is proposed to solve Riesz space fractional nonlinear reaction-diffusion equations in two dimensions. Theoretically, 
under the Lipschitz assumption on the nonlinear term, 
the proposed high-order scheme is proved to be unconditionally stable and convergent in the discrete $L_2$-norm. Moreover, a $\tau$-matrix based preconditioner is developed to speed up the convergence of the conjugate gradient method with an optimal convergence rate (a convergence rate independent of mesh sizes) for solving the symmetric discrete linear system. Theoretical analysis shows that the spectra of the preconditioned matrices are uniformly bounded in the open interval $(3/8,2)$. To the best of our knowledge, this is the first attempt to develop a preconditioned iterative solver with a mesh-independent convergence rate for the linearized high-order scheme. Numerical examples are given to validate the accuracy of the scheme and the effectiveness of the proposed preconditioned solver.
\end{abstract}

\noindent{\bf Keywords:} fractional nonlinear reaction-diffusion equations, Lipschitz condition, Fourth-order scheme, $\tau$-matrix based preconditioner, Preconditioned conjugated gradient method, Mesh-independent convergence rate

\bigskip

\noindent{\bf AMS Subject Classification:} 26A33 , 65L12 , 65L20 , 65N15

\section{Introduction}
In this article, we focus on the following two-dimensional (2D) Riesz space fractional nonlinear reaction-diffusion equations (RSFNRDEs):
\begin{eqnarray}\label{2D-RSFNRDEs-eq}
\begin{array}{l}
\left\{
  \begin{array}{ll}
    \displaystyle{\frac{\partial u(x,y,t)}{\partial t}=K_{\alpha}\frac{\partial^\alpha u(x,y,t)}{\partial |x|^\alpha}+K_\beta\frac{\partial^\beta u(x,y,t)}{\partial |y|^\beta}+f(x,y,t,u(x,y,t))}, \quad (x,y,t)\in\Omega\times(0,T]\\
    \vspace{2mm}
    u(x,y,0)=u_0(x,y),\quad (x,y)\in\Omega,\\
    \vspace{2mm}
    u(x,y,t)=0,\quad (x,y,t)\in({\mathbb{R}^2}\textbackslash\Omega)\times(0,T] ,
  \end{array}
\right.
\end{array}
\end{eqnarray}
where $\Omega=(x_L,x_R)\times(y_D,y_{\small U})$, $\alpha,\beta\in(1,2)$, 
$K_\alpha, K_\beta>0$ are two constants, 
and  
$u_{0}(x,y)$ is the initial condition. $f(x,y,t,u(x,y,t))$ is a nonlinear and continuous source term, and satisfies the Lipschitz condition, i.e.,
\begin{equation}\label{Lipschitz-condition}
\vert f(x,y,t,u)-f(x,y,t,v)\vert\leq L_1|u-v|,
\end{equation}
where $L_1$ is a positive constant. In particular, when $f(x,y,t,u)=f(x,y,t)$, Eq. (\ref{2D-RSFNRDEs-eq}) reduces to a linear problem. The Riesz fractional derivatives $\frac{\partial^\alpha u(x,y,t)}{\partial |x|^\alpha}$ and $\frac{\partial^\beta u(x,y,t)}{\partial |y|^\beta}$ are proportional to the average of left and right Riemann-Liouville (R-L) fractional derivatives,
which are defined as follows:
$$
\frac{\partial^{\alpha} u(x,y,t)}{\partial |x|^{\alpha}}
=\sigma_{\alpha}\Big({_{x_L}}\mathcal {D}_{x}^{\alpha}u(x,y,t) +{_{x}}\mathcal {D}_{x_R}^{\alpha}u(x,y,t)\Big),~~\sigma_{\alpha}=-\frac{1}{2\cos(\frac{\alpha\pi}{2})}>0,
$$
$$
\frac{\partial^{\alpha} u(x,y,t)}{\partial |y|^{\beta}}
=\sigma_{\beta}\Big({_{y_D}}\mathcal {D}_{y}^{\beta}u(x,y,t) +{_{y}}\mathcal {D}_{y_U}^{\beta}u(x,y,t)\Big),~~\sigma_{\beta}=-\frac{1}{2\cos(\frac{\beta\pi}{2})}>0,
$$
where ${_{x_L}}\mathcal {D}_{x}^{\alpha}u(x,y,t)$, ${_{x}}\mathcal {D}_{x_R}^{\alpha}u(x,y,t)$, ${_{y_D}}\mathcal {D}_{y}^{\beta}u(x,y,t)$, and ${_{y}}\mathcal {D}_{y_U}^{\beta}u(x,y,t)$ are left and right R-L fractional derivatives of orders $1<\alpha,\beta<2$, respectively. For the definitions of left and right R-L fractional derivatives, we refer readers to \cite{MT1,MT2} for details. 

Fractional differential equations (FDEs) serve as powerful mathematical tools for accurate descriptions of challenging phenomena, such as biology \cite{Magin1}, physics \cite{BWM1,BWM2,CLZ1}, random walks \cite{BMK1}, image processing \cite{BF1}, finance \cite{raberto2002waiting} and control problems \cite{du2016fast,pougkakiotis2020fast}. With the increase in applications involving FDEs, the study of the solution to FDEs has been significant. However, since analytical solutions for FDEs are rarely obtainable, these types of equations are of numerical interest, which also makes
the computation of numerical solutions an active field of research. In the realm of the numerical method for FDEs, there have been significant and successful developments, including
the finite difference method \cite{MT1,MT2,SL1,ZTD1,LNS3,TZD1,LL1,LSH1,she2022class,DL1,
ding2015high,ding2023high,HSC1,CD2} the finite element method \cite{JLPZ1,zhao2015finite,BTY,D1} and the finite volume method \cite{donatelli2018spectral,fu2019stability,LZTBA1,liu2020analysis}. Among those methods,
it is not difficult to find that getting numerical solutions with high-order accuracy is the most necessary and intuitive way.

In what follows, we will first review some progress (discretization and algorithms) on the space FDEs with Riesz fractional derivatives like RSFNRDEs (\ref{2D-RSFNRDEs-eq}). So far, for discretizing the Riesz fractional derivative, there are two common methods: One is the indirect method, which is based on the relationship between the left and right R-L fractional derivatives and the Riesz fractional derivative; hence, the discretization methods with first-order to fourth-order accuracies for the Riesz fractional derivative can be founded in \cite{MT1,MT2,SL1,TZD1,LL1,LSH1,she2022class,HSC1}. The other one is the direct method, which is especially for the Riesz fractional derivative. In more details, Ortigueira \cite{ortigueira2006riesz} established a second-order formula for the Riesz fractional derivative. Based on the work in \cite{ortigueira2006riesz}, \c{C}elik and Duman \cite{CD2} proposed a second-order fractional centered difference (FCD) discretization for solving fractional diffusion equations. Subsequently, Xiao and Wang \cite{xiao2019symplectic} constructed a fourth-order FCD formula for the Riesz fractional derivative and successfully applied it to the fractional Schr\"odinger equation. Recently, according to a new appropriate generating function resulted from the modifications and improvements of \cite{CD2,ortigueira2006riesz}, Ding and Li \cite{ding2023high} established a novel fourth-order formula for the Riesz fractional derivative. As already
observed in \cite{ding2023high}, after discretized
by the novel fourth-order formula to the space fractional complex Ginzburg-Landau equation, the high-order scheme has the same structure as the low-order one, which is suitable for the extension to other FDEs. In addition, the study of some other high-order formulas to the Riesz fractional derivative can be found in \cite{ZTD1,ding2015high,ding2016high,ding2017high,cheng2019novel,hu2020fourth}.

Note that the fractional differential operator is nonlocal, thus numerical solutions of FDEs are often required to solve dense discrete linear systems with a Toeplitz-like structure \cite{WWS1}. By exploiting this matrix structure, the discrete linear systems can be solved via fast Fourier transforms (FFTs).
However, as usual, the systems are often ill-conditioned as the matrix size increases, which means that solving such a linear system is computationally intensive. To reduce the computational cost, fast algorithms based on FFTs coupling preconditioning techniques have been proposed extensively. Among them, we mention some popular preconditioners including circulant preconditioner \cite{lei2013circulant,lei2016multilevel}, approximate inverse preconditioner \cite{PKNS1}, structure-preserving preconditioner \cite{DMS1}, banded preconditioner \cite{LYJ1,SLYL1,JLZ1}, multigrid preconditioner \cite{moghaderi2017spectral}, and splitting preconditioner \cite{LNS1,BLP1}. Note that the convergence rates of the iterative methods with the above preconditioners are not mesh-independent in general, which can lead to an increase in the number of iterations when the size of the matrix is increasing. It is remarked that this phenomenon is clearly evident in the multi-dimensional setting.
Therefore, investigating the mesh-independent convergence rate for some preconditioned iterative methods constitutes an interesting and promising research direction.
Along this line, circulant-type preconditioners are not a good choice since most of the preconditioned matrices are merely expressed as
the sum of the identity matrix plus a small norm matrix and a low-rank matrix, which cannot achieve either mesh-independent or superlinear convergence rate in the multi-dimensional cases, see \cite{capizzano2000any} for more details.

Recently, to solve the symmetric Toeplitz discrete linear system resulted from some FDEs with Riesz fractional derivative, the $\tau$ preconditioner has gained significant interest and success, which is shown in its superior performance compared with the circulant version. Since such preconditioner can be diagonalized by fast discrete sine transforms (DSTs), allowing us to perform the inversion and the matrix-vector multiplication involving $\tau$ matrix in ${\mathcal O}(N\log N)$ operations, where $N$ is the matrix size.
In addition, it costs ${\mathcal O}(N)$ storage requirement for the $\tau$ matrix by only storing its first column. For these reasons,
$\tau$ preconditioner is indeed suitable for solving the symmetric Toeplitz system. Specifically, in \cite{huang2021spectral}, Huang et al. proposed the $\tau$ preconditioner for solving symmetric and positive definite (SPD) Toeplitz linear system resulted from Riesz fractional diffusion equations in high dimensions. More importantly, they have shown that all eigenvalues of the preconditioned matrices lie in $(1/2,3/2)$,
which ensures the mesh-independent convergence rate for the preconditioned conjugate gradient (PCG) method. After that,
the authors also extended the $\tau$ preconditioner for solving Riesz space fractional diffusion equations in convex domains \cite{huang2021preconditioner}, multi-dimensional fractional diffusion equations with R-L fractional derivatives \cite{lin2023tau}, and space fractional Cahn-Hilliard equations \cite{huang2022preconditioners,huang2023preconditioned}. Motivated by the work \cite{huang2021spectral}, a $\tau$ preconditioner is designed by Zhang et al. \cite{zhang2022fast} to efficiently solve the discrete linear systems stemming from the fractional centered discretization of the considered 2D RSFNRDEs (\ref{2D-RSFNRDEs-eq}) and 
the spectra of the preconditioned matrices are also shown to be independent of mesh sizes and far from zero. 
Moreover, according to the spectral symbol,
Barakitis et al. \cite{barakitis2022preconditioners} proposed a novel $\tau$-algebra preconditioner for ill-conditioned symmetric Toeplitz discrete linear systems where the generating functions of coefficient matrices have a unique zero at 
the origin of fractional order ranging from $1$ to $2$,
which outperforms
the preconditioners proposed in \cite{DMS1,moghaderi2017spectral}. For some other applications of $\tau$-matrix based preconditioner, we refer the reader to \cite{tang2024new,tang2022lopsided,tang2023matrix,huang2023tau,zeng2022tau,mazza2023algebra,
aceto2023rational,shao2022preconditioner,LinDongHon2023,hon2023sine} and references therein. It is worth mentioning that developing a preconditioning strategy based on the fast DSTs to solve the high-order symmetric Toeplitz linear systems resulted from the space FDEs with Riesz fractional derivative is needed. To the best of our knowledge, there is no preconditioned iterative method with $\tau$-matrix based preconditioner for solving such kind of linear systems that has a mesh-independent convergence rate, which inspires us to consider this problem.

The main objective of this paper consists of two aspects: the first is to develop a novel fourth-order scheme for the 2D RSFNRDEs that is unconditionally stable and convergent.
The second is to propose an efficient preconditioner for the symmetric two-level Toeplitz linear systems resulted from the 2D RSFNRDEs (\ref{2D-RSFNRDEs-eq}) so that the PCG method with the proposed preconditioner converges independently of the discretization mesh sizes. Different from the existing preconditioning strategy mentioned in \cite{huang2023tau}, in this article, we show that the spectra of the preconditioned matrix fall in an open interval for all $1<\alpha,\beta<2$, as opposed to
\cite{huang2023tau} where the spectra are only
verified through numerical experiments for $1.4674<\alpha,\beta<2$. In addition, it is the first preconditioned iterative solver in the literature with a mesh-independent convergence rate for the linearized high-order scheme. The numerical results also emphasize that the proposed preconditioner needs fewer iterations and CPU times than circulant-type preconditioners, such as T. Chan's and Strang's circulant preconditioners \cite{Jin1,chan1989toeplitz,chan2007introduction}.


The remainder of this article is arranged as follows. Section 2 reviews some preliminaries on the novel fourth-order approximation to the Riesz fractional derivative. In Section 3, we develop a high-order fully discrete numerical scheme for 2D RSFNRDEs, and its unconditional stability and convergence analysis are also provided under the discrete $L_2$-norm. In Section 4, we design the preconditioning strategies for the discrete linear system and study the spectra of the preconditioned matrix. In Section 5, some numerical results are reported to show the correctness of the scheme and the effectiveness of the proposed preconditioner. Some conclusions of this article are drawn in Section 6.

\section{A fourth-order approximation to the Riesz fractional derivative}
In \cite{ding2023high}, Ding introduced a novel fourth-order approximation to the Riesz fractional derivative, which is the modification and improvement of the approximations given in \cite{CD2}. The main results are listed below.

\begin{lemma}{\rm (\cite{ding2023high})}
Let 
\begin{equation}\nonumber
\mathcal{L}^{4+\mu}(\mathbb{R})=\left\{u(x)\in L^{1}(\mathbb{R})|\int_{-\infty}^{\infty}(1+|\eta|)^{4+\mu}|\hat{u}(\eta)|d\eta<\infty\right\},
\end{equation}
be the fractional Sobolev space.
Suppose $u(x)\in\mathcal{L}^{4+\mu}(\mathbb{R})$, then we have the following fourth-order approximation:
\begin{eqnarray}\nonumber
\dfrac{\partial^\alpha u(x)}{\partial |x|^\alpha}=-\frac{1}{h^\alpha}\sum_{k=-\infty}^{\infty}s_{k}^{(\alpha)}
u(x-kh)+\mathcal{O}(h^4), \quad h\to 0,
\end{eqnarray}
where 
the coefficients $s_{k}^{(\alpha)}$ $(k=0,\pm 1,\pm 2,\ldots)$ are determined by the Fourier expansion of the generating function
\begin{equation}\label{generatingS}
S(\omega):=\left[1+\frac{\alpha}{24}(2-\omega-\omega^{-1})\right](2-\omega-\omega^{-1})^\frac{\alpha}{2}=\sum_{k=-\infty}^{\infty}s_{k}^{(\alpha)}\omega^k,\quad |\omega|\leq 1.
\end{equation}
Setting $\omega=e^{{\bf i}\theta}$ with ${\bf i}=\sqrt{-1}$ in Eq. (\ref{generatingS}) and using the inverse Fourier transform formula coupling the following formula,
\begin{equation}\label{Riesz_coef}
\frac{1}{2\pi}\int_{-\pi}^{\pi}(2-e^{{\bf i}\theta}-e^{{\bf -i}\theta})^{\frac{\alpha}{2}e^{-{\bf i} k\theta}}d\theta=\frac{(-1)^{k}\Gamma(\alpha+1)}{\Gamma(\frac{\alpha}{2}-k+1)\Gamma(\frac{\alpha}{2}+k+1)},
\end{equation}
one gets the explicit expression of the coefficient $s_{k}^{(\alpha)}$,
\begin{equation}\label{coesk}
\begin{footnotesize}
s_{k}^{(\alpha)}=\frac{(-1)^{k}\Gamma(\alpha+1)}{\Gamma(\frac{\alpha}{2}-k+1)\Gamma(\frac{\alpha}{2}+k+1)}
\left[1+\frac{\alpha(\alpha+1)(\alpha+2)}{6(\alpha-2k+2)(\alpha+2k+2)}\right],\quad k=0,\pm 1,\pm 2, \ldots.
\end{footnotesize}
\end{equation}
\end{lemma}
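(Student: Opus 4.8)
The plan is to work entirely in the Fourier domain, treating both sides as Fourier multipliers on $\mathbb{R}$. First I would record the two relevant symbols. Since the left and right R-L derivatives carry symbols $({\bf i}\eta)^\alpha$ and $(-{\bf i}\eta)^\alpha$, the averaged Riesz operator has the real symbol $\sigma_\alpha\big[({\bf i}\eta)^\alpha+(-{\bf i}\eta)^\alpha\big]=\sigma_\alpha\cdot 2|\eta|^\alpha\cos(\tfrac{\alpha\pi}{2})=-|\eta|^\alpha$, where the last equality is exactly the choice of $\sigma_\alpha$ in the definition. On the discrete side, the operator $u\mapsto -h^{-\alpha}\sum_k s_k^{(\alpha)}u(\cdot-kh)$ is a convolution, and by the defining relation $S(\omega)=\sum_k s_k^{(\alpha)}\omega^k$ its symbol is $-h^{-\alpha}S(e^{-{\bf i}h\eta})$. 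The proof then splits into (a) matching these symbols to order $h^4$, and (b) extracting the closed form of $s_k^{(\alpha)}$.

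For part (a) I would use $2-\omega-\omega^{-1}=4\sin^2(\theta/2)$ at $\omega=e^{-{\bf i}h\eta}$, $\theta=h\eta$, which recasts the discrete symbol as
\begin{equation}\nonumber
\frac{1}{h^\alpha}S(e^{-{\bf i}h\eta})=|\eta|^\alpha\left(\frac{\sin s}{s}\right)^\alpha\left[1+\frac{\alpha}{6}\sin^2 s\right],\qquad s=\frac{h\eta}{2}.
\end{equation}
The heart of the matter is a short Taylor expansion: $(\sin s/s)^\alpha=1-\tfrac{\alpha}{6}s^2+\mathcal{O}(s^4)$ while the bracketed correction equals $1+\tfrac{\alpha}{6}s^2+\mathcal{O}(s^4)$, so their product is $1+\mathcal{O}(s^4)$ with the two $\mathcal{O}(s^2)$ terms cancelling identically. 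This cancellation is the entire purpose of the factor $1+\tfrac{\alpha}{24}(2-\omega-\omega^{-1})$: it promotes the classical second-order fractional centered difference to fourth order. Hence $h^{-\alpha}S(e^{-{\bf i}h\eta})=|\eta|^\alpha+\mathcal{O}(h^4|\eta|^{\alpha+4})$, and writing $e_h$ for the difference between the exact derivative and its approximation, inverse transformation gives $\|e_h\|_\infty\le Ch^4\int_{\mathbb{R}}|\eta|^{\alpha+4}|\hat u(\eta)|\,d\eta$, which is finite precisely because $u\in\mathcal{L}^{4+\mu}(\mathbb{R})$; this yields the uniform $\mathcal{O}(h^4)$ bound.

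For part (b) I would split $S(\omega)=(2-\omega-\omega^{-1})^{\alpha/2}+\tfrac{\alpha}{24}(2-\omega-\omega^{-1})^{(\alpha+2)/2}$ and read off the Fourier coefficients of each power from formula (\ref{Riesz_coef}), giving the coefficients of $(2-\omega-\omega^{-1})^{\gamma/2}$ as $g_k^{(\gamma)}=\frac{(-1)^k\Gamma(\gamma+1)}{\Gamma(\gamma/2-k+1)\Gamma(\gamma/2+k+1)}$. Thus $s_k^{(\alpha)}=g_k^{(\alpha)}+\tfrac{\alpha}{24}g_k^{(\alpha+2)}$, and a routine simplification of $g_k^{(\alpha+2)}/g_k^{(\alpha)}$ via $\Gamma(z+1)=z\Gamma(z)$ — which produces $\Gamma(\alpha+3)/\Gamma(\alpha+1)=(\alpha+1)(\alpha+2)$ on top and $(\tfrac{\alpha}{2}-k+1)(\tfrac{\alpha}{2}+k+1)=\tfrac14(\alpha-2k+2)(\alpha+2k+2)$ below — collapses to the bracketed factor in (\ref{coesk}).

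I expect the analytic justification of the remainder, not the algebra, to be the main obstacle. The Taylor argument only controls the symbol in the regime $h\eta\to 0$, so to obtain a bound $|\widehat{e_h}(\eta)|\le Ch^4|\eta|^{\alpha+4}|\hat u(\eta)|$ valid for \emph{all} $\eta$ one must also estimate the high-frequency range $|h\eta|\gtrsim 1$ and check that $C$ is independent of $h$. Matching the growth exponent $\alpha+4$ (with $\alpha<2$) against the Sobolev weight $4+\mu$ so that the inversion integral converges is the delicate bookkeeping that makes the clean $\mathcal{O}(h^4)$ statement rigorous.
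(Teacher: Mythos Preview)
The paper does not prove this lemma; it is quoted from \cite{ding2023high}, and only the derivation of the explicit coefficient formula~(\ref{coesk}) is sketched inside the lemma statement itself. Your part~(b) follows exactly that sketch, splitting $S(\omega)$ into $(2-\omega-\omega^{-1})^{\alpha/2}$ and $\tfrac{\alpha}{24}(2-\omega-\omega^{-1})^{(\alpha+2)/2}$ and applying~(\ref{Riesz_coef}) to each term; the Gamma-function bookkeeping is correct and yields~(\ref{coesk}).

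For the fourth-order accuracy claim in part~(a), your Fourier-multiplier argument is the standard one and is correct: the identity $h^{-\alpha}S(e^{-{\bf i}h\eta})=|\eta|^\alpha\,|\sin s/s|^\alpha\,[1+\tfrac{\alpha}{6}\sin^2 s]$ with $s=h\eta/2$, together with the cancellation of the $\mathcal{O}(s^2)$ terms in the Taylor expansion, is precisely what the correction factor $1+\tfrac{\alpha}{24}(2-\omega-\omega^{-1})$ is engineered to achieve. Your remark about the high-frequency regime $|h\eta|\gtrsim 1$ is well taken: there one bounds $|h^{-\alpha}S(e^{-{\bf i}h\eta})-(-|\eta|^\alpha)|$ crudely by $C|\eta|^\alpha\le C|\eta|^{\alpha+4}h^4$, so the same weight suffices. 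With $\mu=\alpha$ the Sobolev assumption $u\in\mathcal{L}^{4+\alpha}(\mathbb{R})$ controls $\int|\eta|^{\alpha+4}|\hat u(\eta)|\,d\eta$ and the uniform $\mathcal{O}(h^4)$ bound follows. This is the approach of \cite{ding2023high}; the paper under review offers no alternative.
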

It is noted that the coefficients $s_{k}^{(\alpha)}$ satisfy the properties as follows.
\begin{proposition}{\rm (\cite{ding2023high})}\label{coef_prop}
For all $1<\alpha<2$, and let $s_{k}^{(\alpha)}$ be defined as (\ref{coesk}). Then we have
\begin{description}
  \item[(i)]  $s_{k}^{(\alpha)}=s_{-k}^{(\alpha)}$ \mbox{for all} $k\geq 1$;
  \item[(ii)]  $s_{0}^{(\alpha)}>0$,\,$s_{\pm 1}^{(\alpha)}<0$;
   \item[(iii)]
   $s_{\pm 2}^{(\alpha)}\left\{
                               \begin{array}{ll}
                                 <0, \,& \hbox{$\alpha\in(1,\alpha^{*})$,} \\
                                 \geq 0, \,& \hbox{$\alpha\in[\alpha^{*},2)$,}
                               \end{array}
                             \right.$
   where $\alpha_{*}\approx 1.6516$;
    \item[(iv)] $s_{\pm k}^{(\alpha)}<0$, $k=3, 4,\ldots$; \mbox{and}
    \item[(v)]  $\sum\limits_{k=-\infty}^{\infty}s_{k}^{(\alpha)}=0.$
\end{description}
\end{proposition}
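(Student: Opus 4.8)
The plan is to factor each coefficient as $s_k^{(\alpha)}=c_k^{(\alpha)}\,g_k^{(\alpha)}$, where $c_k^{(\alpha)}=\frac{(-1)^k\Gamma(\alpha+1)}{\Gamma(\frac{\alpha}{2}-k+1)\Gamma(\frac{\alpha}{2}+k+1)}$ is the underlying fractional-centered-difference coefficient and $g_k^{(\alpha)}=1+\frac{\alpha(\alpha+1)(\alpha+2)}{6(\alpha-2k+2)(\alpha+2k+2)}$ is the correction factor. The crucial algebraic simplification is $(\alpha-2k+2)(\alpha+2k+2)=(\alpha+2)^2-4k^2$, which makes both the $k\mapsto-k$ symmetry and the sign of the denominator transparent. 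Property (i) is then immediate, since $c_k^{(\alpha)}$ and $g_k^{(\alpha)}$ are both even in $k$. Property (v) I would obtain by evaluating the generating function at $\omega=1$: there $2-\omega-\omega^{-1}=0$ and $\alpha/2>0$, so $S(1)=0$; combined with the absolute convergence of the Fourier series (from the $O(k^{-\alpha-1})$ decay of $c_k^{(\alpha)}$ with $\alpha>1$, together with $g_k^{(\alpha)}\to 1$), continuity identifies $\sum_k s_k^{(\alpha)}$ with $S(1)=0$.

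For the sign statements the essential preliminary step is to determine the sign of the base coefficient $c_k^{(\alpha)}$ for every $k$. Applying the reflection formula $\Gamma(z)\Gamma(1-z)=\pi/\sin(\pi z)$ to $\Gamma(\frac{\alpha}{2}-k+1)=\Gamma(1-(k-\frac{\alpha}{2}))$ and using $\sin(\pi(k-\tfrac{\alpha}{2}))=-(-1)^k\sin(\tfrac{\pi\alpha}{2})$, I would rewrite
\[
c_k^{(\alpha)}=-\frac{\Gamma(\alpha+1)\,\sin(\tfrac{\pi\alpha}{2})\,\Gamma(k-\tfrac{\alpha}{2})}{\pi\,\Gamma(\tfrac{\alpha}{2}+k+1)}.
\]
For $1<\alpha<2$ we have $\sin(\tfrac{\pi\alpha}{2})>0$ and $k-\tfrac{\alpha}{2}>0$ for every $k\ge 1$, so every factor on the right is positive, giving $c_k^{(\alpha)}<0$ for all $k\ge 1$; and $c_0^{(\alpha)}=\Gamma(\alpha+1)/\Gamma(\tfrac{\alpha}{2}+1)^2>0$ directly. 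This reflection-formula computation, which is needed precisely to handle $\Gamma$ at the negative non-integer arguments $\tfrac{\alpha}{2}-k+1$, is the main technical obstacle; once it is in place the remainder is sign bookkeeping on $g_k^{(\alpha)}$.

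With the sign of $c_k^{(\alpha)}$ fixed, the task reduces to tracking the sign of $g_k^{(\alpha)}$. For $k=0,1$ the denominator $(\alpha+2)^2-4k^2$ equals $(\alpha+2)^2$ and $\alpha(\alpha+4)$ respectively, both positive, so $g_0^{(\alpha)},g_1^{(\alpha)}>0$; combined with $c_0^{(\alpha)}>0$ and $c_1^{(\alpha)}<0$ this yields property (ii). For $k=2$ the denominator $(\alpha-2)(\alpha+6)$ is negative on $(1,2)$, and clearing it shows that $g_2^{(\alpha)}$ has the sign of the cubic $p(\alpha)=\alpha^3+9\alpha^2+26\alpha-72$ divided by a negative quantity. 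I would then verify $p(1)=-36<0<24=p(2)$ and $p'(\alpha)=3\alpha^2+18\alpha+26>0$ on $(1,2)$, so $p$ has a unique root $\alpha^{*}\approx 1.6516$; hence $g_2^{(\alpha)}>0$ for $\alpha\in(1,\alpha^{*})$ and $g_2^{(\alpha)}<0$ for $\alpha\in(\alpha^{*},2)$, with equality at $\alpha^{*}$. Multiplying by $c_2^{(\alpha)}<0$ reverses these signs and reproduces exactly the piecewise statement in property (iii).

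Finally, for property (iv) it suffices to show $g_k^{(\alpha)}>0$ for all $k\ge 3$, whence $s_k^{(\alpha)}=c_k^{(\alpha)}g_k^{(\alpha)}<0$. Since $\alpha+2<4$, we have $4k^2-(\alpha+2)^2\ge 36-16=20$ for $k\ge 3$, so the correction term is bounded in magnitude by
\[
\frac{\alpha(\alpha+1)(\alpha+2)}{6\bigl(4k^2-(\alpha+2)^2\bigr)}<\frac{24}{120}<1,
\]
giving $g_k^{(\alpha)}>0$ uniformly for $k\ge 3$ and $\alpha\in(1,2)$. This completes all five parts; I expect the only delicate point to be the reflection-formula sign analysis of $c_k^{(\alpha)}$, since everything else reduces to elementary estimates and the monotonicity of a single cubic.
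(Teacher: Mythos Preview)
Your argument is correct. The paper itself does not give a proof of this proposition; it is quoted verbatim from \cite{ding2023high} and simply cited, so there is no in-paper proof to compare against. Your factorization $s_k^{(\alpha)}=c_k^{(\alpha)}g_k^{(\alpha)}$, the reflection-formula computation showing $c_k^{(\alpha)}<0$ for all $k\ge 1$, the cubic analysis $p(\alpha)=\alpha^3+9\alpha^2+26\alpha-72$ for the $k=2$ case, the uniform bound $|g_k^{(\alpha)}-1|<\tfrac{24}{120}$ for $k\ge 3$, and the evaluation $S(1)=0$ together with absolute summability for part (v) are all valid and cover every item cleanly. The only cosmetic point is that in (iv) your inequality $4k^2-(\alpha+2)^2\ge 20$ is in fact strict for $\alpha\in(1,2)$, but this does not affect the conclusion.
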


\section{Fully discrete numerical scheme and its theoretical analysis}
In this section, we develop the fully discrete numerical scheme for 2D RSFNRDEs. Let $\Delta t=\frac{T}{M}$, $h_x=\frac{x_R-x_L}{N_x+1}$ and $h_y=\frac{y_U-y_D}{N_y+1}$ with $M$, $N_x$ and $N_y$ being three positive integers. By adopting the fourth-order approximation given in Section 2, we discretize the Riesz fractional derivatives at point $(x_i,y_j,t_m)$ as
\begin{eqnarray}\label{4th-discretization}
\begin{array}{l}
\left\{
  \begin{array}{ll}
    \displaystyle{\frac{\partial^\alpha u(x_i,y_j,t_m)}{\partial |x|^\alpha}=-\frac{1}{h_x^\alpha}\sum_{k=-N_x+i}^{i-1}s_{k}^{(\alpha)}
     u(x_{i-k},y_j,t_m)+\mathcal{O}(h_x^4)}, \\
    \vspace{2mm}
    \displaystyle{\frac{\partial^\beta u(x_i,y_j,t_m)}{\partial |y|^\beta}=-\frac{1}{h_y^\beta}\sum_{k=-N_y+j}^{j-1}s_{k}^{(\beta)}
     u(x_{i},y_{j-k},t_m)+\mathcal{O}(h_y^4)}, \\
  \end{array}
\right.
\end{array}
\end{eqnarray}
where $t_m=m\Delta t$ for $m=0,1,\ldots,M$, 
$x_i=x_L+ih_x$ for $i=0,1,\ldots,N_x+1$, and $y_j=y_D+jh_y$ for $j=0,1,\ldots,N_y+1$. 
Then we use the Taylor expansion to approximate the first-order time partial derivative at point $(x_i,y_j,t_{m+1/2})$
\begin{equation}\label{CN}
\frac{\partial u(x_i,y_j,t_{m+1/2})}{\partial t}=\frac{u(x_i,y_j, t_{m+1})-u(x_i,y_j,t_{m})}{\Delta t}+\mathcal{O}(\Delta t^2).
\end{equation}
For the nonlinear term $f(x_i,y_j,t_{m+1/2},u(x_i,y_j,t_{m+1/2}))$, we will apply the second-order explicit time discretization method \cite{huang2022preconditioners}
\begin{eqnarray}\label{lineariedf}\footnotesize
&&f(x_i,y_j,t_{m+1/2},u(x_i,y_j,t_{m+1/2}))\nonumber\\
&=&\frac{3}{2}f(x_i,y_j,t_{m+1/2},u(x_i,t_m))-\frac{1}{2}f(x_i,t_{m+1/2},u(x_i,y_j,t_{m-1}))+\mathcal{O}(\Delta t^2).
\end{eqnarray}
Combining (\ref{4th-discretization})-(\ref{lineariedf}), 
the fourth-order discretization coupling the Crank-Nicolson (CN) explicit linearized method for the 2D RSFNRDEs is given by
{\small
\begin{eqnarray}\label{2D-1}
&&u(x_i,y_j,t_{m+1})+\eta_\alpha\sum_{k=-N_x+i}^{i-1}s_{k}^{(\alpha)}u({x_{i-k},y_j,t_{m+1}})
+\eta_\beta\sum_{k=-N_y+j}^{j-1}s_{k}^{(\beta)}u({x_i,y_{j-k},t_{m+1}})\nonumber\\
&=&u(x_i,y_j,t_m)-\eta_\alpha\sum_{k=-N_x+i}^{i-1}s_{k}^{(\alpha)}u(x_{i-k},y_j,t_m)
-\eta_\beta\sum_{k=-N_y+j}^{j-1}s_{k}^{(\beta)}u(x_i,y_{j-k},t_{m})\nonumber\\
&&+\frac{3}{2}\Delta t f(x_i,y_j,t_{m+1/2},u(x_i,y_j,t_{m}))-\frac{1}{2} \Delta t f(x_i,y_j, t_{m+1/2},u(x_i,y_j,t_{m-1}))+{\Delta t}{\rho}_{i,j}^{m},
\end{eqnarray}
}
where $1\leq i\leq N_x$, $1\leq j\leq N_y$, $0\leq m\leq M$,
$\eta_\alpha=\frac{K_{\alpha}\Delta t}{2h_x^{\alpha}}$, $\eta_\beta=\frac{K_{\beta}\Delta t}{2h_y^{\beta}}$ and ${\rho}_{i,j}^{m}=\mathcal{O}(\Delta t^2+h_x^4+h_y^4)$. By neglecting the term ${\rho}_{i,j}^{m}$ and replacing $u(x_i,y_j,t_m)$
with its numerical approximation $U_{i,j}^{m}$, we obtain the 2D linearized CN fourth-order scheme
\begin{eqnarray}\label{2D-1-2}
&&U_{i,j}^{m+1}+\eta_\alpha\sum_{k=-N_x+i}^{i-1}s_{k}^{(\alpha)}U_{i-k,j}^{m+1}
+\eta_\beta\sum_{k=-N_y+j}^{j-1}s_{k}^{(\beta)}U_{i,j-k}^{m+1}\nonumber\\
&=&U_{i,j}^{m}-\eta_\alpha\sum_{k=-N_x+i}^{i-1}s_{k}^{(\alpha)}U_{i-k,j}^{m}
-\eta_\beta\sum_{k=-N_y+j}^{j-1}s_{k}^{(\beta)}U_{i,j-k}^{m}\nonumber\\
&&+\frac{3}{2}\Delta t f(x_i,y_j,t_{m+1/2},U_{i,j}^{m})
-\frac{1}{2}\Delta t f(x_i,y_j,t_{m+1/2},U_{i,j}^{m-1}).
\end{eqnarray}
Note that scheme (\ref{2D-1-2}) is a two-step method, which requires two starting values. In particular, we set $U_{i,j}^{-1}=U_{i,j}^0$, then scheme (\ref{2D-1-2}) can be implemented.

Now, let $N=N_xN_y$, and define $U^{m}$ and $F^{m}$ as 
\begin{equation}\nonumber
U^{m}=\big(U_{1,1}^{m},\ldots,U_{N_x,1}^{m},U_{1,2}^{m},\ldots,U_{N_x,2}^{m},\ldots,
U_{1,N_y}^{m},\ldots,U_{N_x,N_y}^{m}\big)^{T}
\end{equation}
and
\begin{eqnarray}\label{source-term}
F^{m}=\Big(f(x_1,y_1,t_{m+1/2},U_{1,1}^m),\ldots, f(x_{N_x},y_1,t_{m+1/2},U_{N_x,1}^m),f(x_1,y_2,t_{m+1/2},U_{1,2}^m),\ldots,\nonumber\\
f(x_{N_x},y_2,t_{m+1/2}, U_{N_x,2}^m),\ldots,f(x_{1},y_{N_y},t_{m+1/2},U_{1,N_y}^m),\ldots,
f(x_{N_x},y_{N_y},t_{m+1/2}, U_{{N_x},{N_y}}^m)\Big)^T.\nonumber
\end{eqnarray}
Then the discrete linearized scheme (\ref{2D-1-2}) can be expressed as
\begin{equation}\label{2D-matrix-form}
(I_{N}+J_{\alpha,\beta})U^{m+1}=(I_{N}-J_{\alpha,\beta})U^{m}+\Delta t(\frac{3}{2}F^{m}-\frac{1}{2}F^{m-1}),\quad 0\leq m\leq M-1,
\end{equation}
where $I_N$ is the identity matrix of order $N$, and
\begin{equation}\nonumber
J_{\alpha,\beta}=\eta_\alpha (I_{N_y}\otimes{A}_{\alpha})+\eta_\beta (A_{\beta}\otimes I_{N_x}),
\end{equation}
in which $\otimes$ denotes the Kronecker product.
The matrix $A_{\alpha}$ is of the form
\begin{eqnarray}\label{Toeplitz-Walpha}
A_{\alpha}=\left[\begin{array}{ccccc}
s_0^{(\alpha)} & s_{-1}^{(\alpha)} & \cdots & s_{2-N_x}^{(\alpha)} & s_{1-N_x}^{(\alpha)}\\
s_{1}^{(\alpha)} & s_0^{(\alpha)} & s_{-1}^{(\alpha)} & \ddots & s_{2-N_x}^{(\alpha)}\\
\vdots & s_{1}^{(\alpha)} & s_0^{(\alpha)} & \ddots & \vdots\\
s_{N_x-2}^{(\alpha)} & \ddots & \ddots & \ddots & s_{-1}^{(\alpha)}\\
s_{N_x-1}^{(\alpha)} & s_{N_x-2}^{(\alpha)} & \cdots & s_{1}^{(\alpha)} & s_0^{(\alpha)}
\end{array}
\right]
\end{eqnarray}
with $s_{k}^{(\alpha)}$ being defined in (\ref{coesk}). $A_{\beta}$ is defined similarly as $A_{\alpha}$ just with $\alpha$ (resp. $N_x$) replaced by $\beta$ (resp. $N_y$).

Next, we will consider the stability and the convergence of the scheme (\ref{2D-matrix-form}) for 2D RSFNRDEs. Firstly, we discuss its unconditional stability. Let ${\widetilde U}_{i,j}^m$ and $U_{i,j}^m$ be two solutions of the scheme (\ref{2D-matrix-form}), and define
\begin{equation}\nonumber
\epsilon_{i,j}^m={\widetilde U}_{i,j}^m-U_{i,j}^m,\quad m=0,1,\ldots,M,i=1,2,\ldots,N_x,j=1,2,\ldots,N_y.
\end{equation}
According to (\ref{2D-1-2}), $\epsilon_{i,j}^m$ satisfies the following equation
\begin{equation}\label{2D-2}
\begin{aligned}
&\epsilon_{i,j}^{m+1}+\eta_\alpha\sum_{k=-N_x+i}^{i-1}s_{k}^{(\alpha)}\epsilon_{i-k,j}^{m+1}
+\eta_\beta\sum_{k=-N_y+j}^{j-1}s_{k}^{(\beta)}\epsilon_{i,j-k}^{m+1}\\
=&\epsilon_{i,j}^{m}-\eta_\alpha\sum_{k=-N_x+i}^{i-1}s_{k}^{(\alpha)}\epsilon_{i-k,j}^{m}
-\eta_\beta\sum_{k=-N_y+j}^{j-1}s_{k}^{(\beta)}\epsilon_{i,j-k}^{m}+\frac{3}{2}\Delta t(f(x_i,y_j,t_{m+1/2},{\widetilde U}_{i,j}^m)\\
&-f(x_i,y_j,t_{m+1/2},U_{i,j}^m))-\frac{1}{2}\Delta t(f(x_i,y_j,t_{m+1/2},{\widetilde U}_{i,j}^{m-1})-f(x_i,y_j,t_{m+1/2},U_{i,j}^{m-1})).
\end{aligned}
\end{equation}
Furthermore, denote
$$
E^{m}=(\epsilon_{1,1}^m,\ldots,\epsilon_{N_x,1}^m,\epsilon_{1,2}^m,\ldots,
\epsilon_{N_x,2}^m,\ldots,\epsilon_{1,N_y}^m,\ldots,\epsilon_{N_x,N_y}^m)^{T},
$$
for $m=0,1,\ldots,M-1$, we have, 
\begin{equation}\label{2D-U-UEQ-matrix}
(I_{N}+J_{\alpha,\beta})E^{m+1}=(I_{N}-J_{\alpha,\beta})E^{m}+\frac{3}{2}\Delta t({\widetilde F}^{m}-F^{m})-\frac{1}{2}\Delta t({\widetilde F}^{m-1}-F^{m-1}),
\end{equation}
where
\begin{eqnarray}
{\widetilde F}^{m}=\Big(f(x_1,y_1,t_{m+1/2},{\widetilde U}_{1,1}^m),\ldots, f(x_{N_x},y_1,t_{m+1/2},{\widetilde U}_{N_x,1}^m),f(x_1,y_2,t_{m+1/2},{\widetilde U}_{1,2}^m),\ldots,\nonumber\\
f(x_{N_x},y_2,t_{m+1/2},{\widetilde U}_{N_x,2}^m),\ldots,f(x_{1},y_{N_y},t_{m+1/2},{\widetilde U}_{1,N_y}^m),\ldots,f(x_{N_x},y_{N_y},t_{m+1/2},{\widetilde U}_{{N_x},{N_y}}^m)\Big)^T.\nonumber
\end{eqnarray}
Now, we prove the unconditional stability in the sense that
\begin{equation}\nonumber
||E^{m+1}||\leq\exp{(3C_{L}T)}||E^{0}||,\quad m=0,1,\ldots,M-1,
\end{equation}
where $C_L$ is a positive constant and $||E^{m+1}||$ denotes the discrete $L_2$-norm of $E^{m+1}$, i.e.,
$$
||E^{m+1}||=\sqrt{h_xh_y\sum_{i=1}^{N_x}\sum_{j=1}^{N_y}({\epsilon_{i,j}^{m+1}})^2}.
$$
\begin{theorem}\label{unconditional-stability}
The scheme (\ref{2D-matrix-form}) is unconditionally stable.
\end{theorem}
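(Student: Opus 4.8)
The plan is to turn the matrix error recursion \eqref{2D-U-UEQ-matrix} into a one-step estimate governed by the spectral properties of $J_{\alpha,\beta}$, and then close it with a discrete Gr\"onwall induction. Writing $J = J_{\alpha,\beta}$ for brevity, I would first solve \eqref{2D-U-UEQ-matrix} for $E^{m+1}$, obtaining
\[
E^{m+1} = (I_N+J)^{-1}(I_N-J)E^m + (I_N+J)^{-1}\Big[\tfrac{3}{2}\Delta t(\widetilde F^m - F^m) - \tfrac{1}{2}\Delta t(\widetilde F^{m-1}-F^{m-1})\Big].
\]
Taking the discrete $L_2$-norm and applying the triangle inequality, the proof reduces to controlling the two operator norms $\|(I_N+J)^{-1}(I_N-J)\|_2$ and $\|(I_N+J)^{-1}\|_2$ together with a Lipschitz bound on the nonlinear increments. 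Here I would note that the discrete norm differs from the Euclidean norm only by the scalar factor $\sqrt{h_x h_y}$, so the two induced operator norms coincide and it suffices to work with the ordinary spectral norm.

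The crucial structural fact is that $J$ is symmetric positive definite. Symmetry follows from Proposition \ref{coef_prop}(i): since $s_k^{(\alpha)}=s_{-k}^{(\alpha)}$, the matrix $A_\alpha$ (and likewise $A_\beta$) is a real symmetric Toeplitz matrix, and Kronecker products with the identity preserve symmetry. For definiteness I would argue through the generating function rather than through coefficient signs: setting $\omega=e^{{\bf i}\theta}$ in \eqref{generatingS} gives the real symbol
\[
S(e^{{\bf i}\theta}) = \Big[1 + \tfrac{\alpha}{24}(2-2\cos\theta)\Big](2-2\cos\theta)^{\alpha/2} \ge 0,
\]
strictly positive for $\theta\neq0$. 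Since for any nonzero real vector $v$ one has $v^{T} A_\alpha v = \frac{1}{2\pi}\int_{-\pi}^{\pi} S(e^{{\bf i}\theta})\,\big|\sum_k v_k e^{{\bf i}k\theta}\big|^2\,d\theta > 0$, the matrix $A_\alpha$ is positive definite, and the same holds for $A_\beta$. As $\eta_\alpha,\eta_\beta>0$ and $I_{N_y}\otimes A_\alpha$, $A_\beta\otimes I_{N_x}$ inherit positive definiteness, $J$ is symmetric positive definite. I expect this to be the main obstacle, because the sign pattern in Proposition \ref{coef_prop} alone does not give strict diagonal dominance once $s_{\pm2}^{(\alpha)}\ge0$ for $\alpha\in[\alpha^*,2)$, so the generating-function route is the reliable one.

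With $J$ symmetric positive definite, each eigenvalue $\lambda>0$ of $J$ produces eigenvalues $(1-\lambda)/(1+\lambda)\in(-1,1)$ of $(I_N+J)^{-1}(I_N-J)$ and $1/(1+\lambda)\in(0,1)$ of $(I_N+J)^{-1}$, whence $\|(I_N+J)^{-1}(I_N-J)\|_2\le1$ and $\|(I_N+J)^{-1}\|_2\le1$. Next I would invoke the Lipschitz condition \eqref{Lipschitz-condition}: applied entrywise and summed against the weight $h_xh_y$, it yields $\|\widetilde F^m - F^m\|\le L_1\|E^m\|$. Substituting these bounds into the normed recursion gives
\[
\|E^{m+1}\| \le \Big(1 + \tfrac{3}{2}L_1\Delta t\Big)\|E^m\| + \tfrac{1}{2}L_1\Delta t\,\|E^{m-1}\|.
\]

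Finally I would close the estimate by induction, using the starting convention $E^{-1}=E^0$, which gives $\|E^1\|\le(1+2L_1\Delta t)\|E^0\|$. The claim is $\|E^m\|\le(1+2L_1\Delta t)^m\|E^0\|$ for all $m$; the inductive step reduces to the elementary inequality $(1+\tfrac{3}{2}L_1\Delta t)(1+2L_1\Delta t)+\tfrac{1}{2}L_1\Delta t\le(1+2L_1\Delta t)^2$, which holds since after expansion it amounts to $3L_1^2\Delta t^2\le4L_1^2\Delta t^2$. Then $(1+2L_1\Delta t)^m\le\exp(2L_1 m\Delta t)=\exp(2L_1 t_m)\le\exp(3L_1 T)$, and setting $C_L=L_1$ delivers $\|E^{m+1}\|\le\exp(3C_L T)\|E^0\|$. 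Since every bound is independent of $\Delta t$, $h_x$, and $h_y$, no step-size restriction is needed and the scheme is unconditionally stable.
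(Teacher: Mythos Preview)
Your argument is correct, and it follows a genuinely different route from the paper's. The paper proceeds by an energy estimate: it tests the error recursion against $h_xh_y(E^{m+1}+E^m)^T$, uses the positive definiteness of $J_{\alpha,\beta}$ to discard the term $(E^{m+1}+E^m)^T J_{\alpha,\beta}(E^{m+1}+E^m)$, factors $\|E^{m+1}\|^2-\|E^m\|^2$, applies the Lipschitz bound, sums over $m$, and closes with a discrete Gr\"onwall inequality. You instead bound the transition operators directly: the SPD property of $J_{\alpha,\beta}$ gives $\|(I_N+J)^{-1}(I_N-J)\|_2\le1$ and $\|(I_N+J)^{-1}\|_2\le1$ via the Cayley transform of positive eigenvalues, yielding a clean two-term norm recursion that you resolve by an explicit induction with base $(1+2L_1\Delta t)$, without invoking Gr\"onwall at all. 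Your generating-function argument for positive definiteness of $A_\alpha$ is also more self-contained than the paper's deferral to an external reference, and it neatly sidesteps the lack of diagonal dominance when $s_{\pm2}^{(\alpha)}\ge0$. The trade-off is minor: the energy approach generalizes more readily to variable coefficients or non-normal operators where spectral-norm bounds are unavailable, whereas your spectral route is shorter, gives a slightly sharper constant ($e^{2L_1T}$ rather than $e^{3C_LT}$), and avoids the paper's cosmetic slips with $\sqrt{L_1}$ versus $L_1$.
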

\begin{proof}
We first rewrite (\ref{2D-U-UEQ-matrix}) as
\begin{equation}\label{2D-U-UEQ-matrix-2}
E^{m+1}-E^{m}+J_{\alpha,\beta}(E^{m-1}+E^{m})=\frac{3}{2}\Delta t({\widetilde F}^{m}-F^{m})-\frac{1}{2}\Delta t({\widetilde F}^{m-1}-F^{m-1}),
\end{equation}
then left-multiplying $h_xh_y(E^{m+1}+E^{m})^{T}$ on both sides of (\ref{2D-U-UEQ-matrix-2}), we obtain
\begin{eqnarray}\label{U-UEQ-matrix-3}
&&||E^{m+1}||^2-||E^{m}||^{2}+h_xh_y(E^{m+1}+E^{m})^{T} J_{\alpha,\beta}(E^{m-1}+E^{m})\nonumber\\
&=&h_xh_y(E^{m+1}+E^{m})^{T}\left[\frac{3}{2}\Delta t({\widetilde F}^{m}-F^{m})-\frac{1}{2}\Delta t({\widetilde F}^{m-1}-F^{m-1})\right].
\end{eqnarray}
By Proposition \ref{coef_prop}, we can obtain that $A_\alpha$ and $A_\beta$ are SPD through the similar technique given in \cite{huang2023tau}, which combined with $\eta_\gamma>0$ implies that $J_{\alpha,\beta}=\eta_\alpha (I_{N_y}\otimes{A}_{\alpha})+\eta_\beta (A_{\beta}\otimes I_{N_x})$ is also SPD, i.e.,
\begin{equation}\label{A-alpha-SPD}
(E^{m+1}+E^{m})^{T} J_{\alpha,\beta}(E^{m-1}+E^{m})>0.
\end{equation}
Combining (\ref{U-UEQ-matrix-3}) and (\ref{A-alpha-SPD}), we have
\begin{eqnarray}
||E^{m+1}||^2-||E^{m}||^{2}&\leq&h_xh_y(E^{m+1}+E^{m})^{T}\left[\frac{3}{2}\Delta t({\widetilde F}^{m}-F^{m})-\frac{1}{2}\Delta t({\widetilde F}^{m-1}-F^{m-1})\right]\nonumber\\
&\leq&\Delta t||E^{m+1}+E^{m}||\cdot||\frac{3}{2}({\widetilde F}^{m}-F^{m})-\frac{1}{2}({\widetilde F}^{m-1}-F^{m-1})||\nonumber\\
&\leq&\Delta t\left(||E^{m+1}||+||E^{m}||\right)\cdot\left[\frac{3}{2}||{\widetilde F}^{m}-F^{m}||+\frac{1}{2}||{\widetilde F}^{m-1}-F^{m-1}||\right]\nonumber.
\end{eqnarray}
Hence,
\begin{eqnarray}\label{U-UEQ-matrix-5}
||E^{m+1}||-||E^{m}||
&\leq&\Delta t\left[\frac{3}{2}||{\widetilde F}^{m}-F^{m}||+\frac{1}{2}||{\widetilde F}^{m-1}-F^{m-1}||\right].
\end{eqnarray}
According to the Lipschitz condition (\ref{Lipschitz-condition}), we have
\begin{equation}\label{lip-1}
\left|f(x_i,y_j,t_{m+1/2},{\widetilde U}_{i,j}^m)-f(x_i,y_j,t_{m+1/2},U_{i,j}^m)\right|\leq L_1|{\widetilde U_{i,j}}^{m}-U_{i,j}^{m}|=L_1|{\epsilon_{i,j}^{m}}|,
\end{equation}
and
\begin{equation}
\begin{aligned}\label{lip-2}
\left|f(x_i,y_j,t_{m+1/2},{\widetilde U}_{i,j}^{m-1})-f(x_i,y_j,t_{m+1/2},U_{i,j}^{m-1})\right|\leq L^{'}_1|{\widetilde U}_{i,j}^{m-1}-U_{i,j}^{m-1}|=L^{'}_1|{\epsilon_{i,j}^{m-1}}|.
\end{aligned}
\end{equation}
On the basis of (\ref{lip-1}) and (\ref{lip-2}) coupling the definition of the $L_2$-norm of a vector, we have,
\begin{eqnarray}\label{nonlinearF-1}
||{\widetilde F}^{m}-F^{m}||&=&\sqrt{h_xh_y\sum_{i=1}^{N_x}\sum_{j=1}^{N_y} \left|f(x_i,y_j,t_{m+1/2},{\widetilde U}_{i,j}^m)-f(x_i,y_j,t_{m+1/2},U_{i,j}^m)\right|^2}\nonumber\\
&\leq&\sqrt{L_1 h_xh_y\sum_{i=1}^{N_x}\sum_{j=1}^{N_y}|{\widetilde U}_{i,j}^m-U_{i,j}^m|^2}\nonumber\\
&=&\sqrt{L_1 h_xh_y\sum_{i=1}^{N_x}\sum_{j=1}^{N_y}(\epsilon_{i,j}^m)^2}\nonumber\\
&=&\sqrt{L_1}||E^{m}||.
\end{eqnarray}
Similarly, we have
\begin{eqnarray}\label{nonlinearF-2}
||{\widetilde F}^{m-1}-F^{m-1}||\leq\sqrt{L^{'}_1}||E^{m-1}||.
\end{eqnarray}
Combining with (\ref{U-UEQ-matrix-5}), (\ref{nonlinearF-1}) and (\ref{nonlinearF-2}), we arrive
\begin{eqnarray}\nonumber
||E^{m+1}||-||E^{m}||&\leq&\Delta t \left(\frac{3\sqrt{L_1}}{2}||E^{m}||+\frac{\sqrt{L^{'}_1}}{2}||E^{m-1}||\right),
\end{eqnarray}
namely,
\begin{eqnarray}\label{Em+1toEm}
||E^{m+1}||-||E^{m}||&\leq&\Delta t C_{L} \left(||E^{m}||+||E^{m-1}||\right),
\end{eqnarray}
where $C_L=\max\{\frac{3\sqrt{L_1}}{2},\frac{\sqrt{L^{'}_1}}{2}\}$. Summing up (\ref{Em+1toEm}) for $m$ from $1$ to $k$, we have
\begin{eqnarray}\label{E-k+1toE0}
||E^{k+1}||&\leq&||E^{1}||+\Delta t C_{L} \left(\sum_{p=1}^{k}||E^{p}||+\sum_{p=1}^{k}||E^{p-1}||\right)\nonumber\\
&\leq&||E^{0}||+3\Delta t C_{L}\sum_{p=1}^{k}||E^{p}||.
\end{eqnarray}
Applying Gronwall's inequality (see \cite{cheng2019novel}) to (\ref{E-k+1toE0}), we obtain
\begin{eqnarray}\nonumber
||E^{m+1}||&\leq&\exp(3C_{L}k\Delta t)||E^{0}||\leq\exp(3C_{L}T)||E^{0}||,
\end{eqnarray}
which ends the proof.
\end{proof}

Next, we will prove the convergence of the scheme (\ref{2D-matrix-form}).
\begin{theorem}
Let $U_{i,j}^m$ be the solution of the scheme (\ref{2D-matrix-form}), and $u(x_i,y_j,t_m)$
be the exact solution of (\ref{2D-RSFNRDEs-eq}), then there exists two positive constants $C^{*}$ and $C_{R}$
such that
\begin{equation}\nonumber
||u^{m+1}-U^{m+1}||\leq TC^{*}\exp(3TC_{R})(\Delta t^2+h_x^4+h_y^4),\quad m=0,1,\ldots,M-1.
\end{equation}
\end{theorem}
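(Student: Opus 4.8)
The plan is to mirror the energy argument in the proof of Theorem \ref{unconditional-stability}, the only essential addition being the treatment of the local truncation error. Write $u^{m}$ for the vector of exact nodal values $u(x_i,y_j,t_m)$ and set $e^{m}=u^{m}-U^{m}$. Since the consistency identity (\ref{2D-1}) is exactly the scheme (\ref{2D-matrix-form}) augmented by the truncation vector $\Delta t R^{m}$ with entries $\Delta t\rho_{i,j}^{m}$, subtracting (\ref{2D-matrix-form}) from the exact-solution identity yields
\begin{equation}\nonumber
(I_{N}+J_{\alpha,\beta})e^{m+1}=(I_{N}-J_{\alpha,\beta})e^{m}+\Delta t\Big(\tfrac{3}{2}(\widehat{F}^{m}-F^{m})-\tfrac{1}{2}(\widehat{F}^{m-1}-F^{m-1})\Big)+\Delta t R^{m},
\end{equation}
where $\widehat{F}^{m}$ is the source vector evaluated at the exact values. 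This is precisely the stability recursion (\ref{2D-U-UEQ-matrix}) with $E^{m}$ replaced by $e^{m}$ and the extra forcing $\Delta t R^{m}$.

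First I would pin down the consistency order. Combining the fourth-order spatial formula (\ref{4th-discretization}), the Crank--Nicolson approximation (\ref{CN}) of the time derivative at $t_{m+1/2}$, and the second-order explicit linearization (\ref{lineariedf}) of the nonlinear term, one checks that each component obeys $\rho_{i,j}^{m}=\mathcal{O}(\Delta t^{2}+h_x^{4}+h_y^{4})$ uniformly in $(i,j,m)$. Because the discrete $L_2$-norm carries the weight $h_xh_y$ and $h_xh_yN_xN_y$ is bounded by the area of $\Omega$, this pointwise bound upgrades to $||R^{m}||\leq C^{*}(\Delta t^{2}+h_x^{4}+h_y^{4})$ with $C^{*}$ independent of the mesh. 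I expect this consistency step to be the main obstacle, since it is where the three separate approximations must be shown to combine at the claimed order; the remainder is an almost verbatim repetition of Theorem \ref{unconditional-stability}.

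Next I would run the energy estimate. Rewriting the error recursion as $e^{m+1}-e^{m}+J_{\alpha,\beta}(e^{m+1}+e^{m})=\Delta t(\cdots)+\Delta t R^{m}$ and left-multiplying by $h_xh_y(e^{m+1}+e^{m})^{T}$, the quadratic form $(e^{m+1}+e^{m})^{T}J_{\alpha,\beta}(e^{m+1}+e^{m})$ is nonnegative because $J_{\alpha,\beta}$ is SPD (a consequence of Proposition \ref{coef_prop}, as already used in (\ref{A-alpha-SPD})). Dropping it, applying Cauchy--Schwarz and the triangle inequality, and cancelling the common factor from $||e^{m+1}||^{2}-||e^{m}||^{2}=(||e^{m+1}||-||e^{m}||)(||e^{m+1}||+||e^{m}||)$ gives
\begin{equation}\nonumber
||e^{m+1}||-||e^{m}||\leq\Delta t\Big[\tfrac{3}{2}||\widehat{F}^{m}-F^{m}||+\tfrac{1}{2}||\widehat{F}^{m-1}-F^{m-1}||+||R^{m}||\Big].
\end{equation}
The Lipschitz condition (\ref{Lipschitz-condition}), exactly as in (\ref{nonlinearF-1})--(\ref{nonlinearF-2}), controls the source differences by $||e^{m}||$ and $||e^{m-1}||$, so with a constant $C_R$ absorbing the Lipschitz constants one obtains $||e^{m+1}||-||e^{m}||\leq\Delta tC_{R}(||e^{m}||+||e^{m-1}||)+\Delta tC^{*}(\Delta t^{2}+h_x^{4}+h_y^{4})$.

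Finally I would close by summation and Gronwall's inequality. Using the exact initial data $U^{0}=u_0$ (so $||e^{0}||=0$) together with the starting convention $U^{-1}=U^{0}$, and noting $k\Delta t\leq T$, summing for $m=1,\dots,k$ telescopes the left-hand side and bounds the truncation accumulation by $TC^{*}(\Delta t^{2}+h_x^{4}+h_y^{4})$, giving $||e^{k+1}||\leq TC^{*}(\Delta t^{2}+h_x^{4}+h_y^{4})+3\Delta tC_{R}\sum_{p=1}^{k}||e^{p}||$. The discrete Gronwall inequality (as invoked after (\ref{E-k+1toE0})) then produces the factor $\exp(3C_{R}k\Delta t)\leq\exp(3TC_{R})$ and the stated estimate. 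The one point meriting care is the first step $m=0$: the two-step scheme requires the starting value $U^{-1}=U^{0}$, and I would verify that the induced first-step error $||e^{1}||$ is itself $\mathcal{O}(\Delta t^{2}+h_x^{4}+h_y^{4})$ so that it does not degrade the global convergence order.
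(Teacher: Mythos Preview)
Your proposal is correct and follows essentially the same route as the paper: subtract the scheme from the consistency identity to get an error recursion identical to (\ref{2D-U-UEQ-matrix}) plus the truncation term, drop the SPD quadratic form, use the Lipschitz condition as in (\ref{nonlinearF-1})--(\ref{nonlinearF-2}), sum, and apply the discrete Gronwall inequality with $||e^{0}||=0$. The only differences are cosmetic (notation $e^{m}$ versus $\Psi^{m}$, $R^{m}$ versus $\rho^{m}$) and your explicit remark about the $m=0$ start-up step, which the paper glosses over.
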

\begin{proof}
Denote 
\begin{equation}\label{psi_error}
\psi_{i,j}^m=u(x_i,y_j,t_m)-U_{i,j}^m,
\end{equation}
from (\ref{2D-1}) and (\ref{2D-1-2}), $\psi_{i,j}^m$ satisfies the perturbation equation
\begin{eqnarray}\label{perturbation-equation}
&&\psi_{i,j}^{m+1}+\eta_\alpha\sum_{k=-N_x+i}^{i-1}s_{k}^{(\alpha)}\psi_{i-k,j}^{m+1}
+\eta_\beta\sum_{k=-N_y+j}^{j-1}s_{k}^{(\beta)}\psi_{i,j-k}^{m+1}\nonumber\\
&=&\psi_{i,j}^{m}-\eta_\alpha\sum_{k=-N_x+i}^{i-1}s_{k}^{(\alpha)}\psi_{i-k,j}^{m}
-\eta_\beta\sum_{k=-N_y+j}^{j-1}s_{k}^{(\beta)}\psi_{i,j-k}^{m}\nonumber\\
&&+\frac{3}{2}\Delta t\big(f(x_i,y_j,t_{m+1/2},u(x_i,y_j,t_m))-f(x_i,y_j,t_{m+1/2},U_{i,j}^{m})
\big)\nonumber\\
&&-\frac{1}{2}\Delta t\big(f(x_i,y_j,t_{m+1/2},u(x_i,y_j,t_{m-1}))-f(x_i,y_j,t_{m+1/2},U_{i,j}^{m-1})
\big)+{\Delta t}{ \rho}_{i,j}^{m}.
\end{eqnarray}
Denote
{\footnotesize
\begin{eqnarray}
{\hat F}^{m}=\Big(f(x_1,y_1,t_{m+1/2},u(x_1,y_1,t_m)),\ldots, f(x_{N_x},y_1,t_{m+1/2},u(x_{N_x},y_1,t_m)),f(x_1,y_2,t_{m+1/2},u(x_1,y_2,t_m)),\ldots,\nonumber\\
f(x_{N_x},y_2,t_{m+1/2},u(x_{N_x},y_2,t_m)),\ldots,f(x_{1},y_{N_y},t_{m+1/2},u(x_1,y_{N_y},t_m)),
\ldots,f(x_{N_x},y_{N_y},t_{m+1/2},u(x_{N_x},y_{N_y},t_m))\Big)^T,\nonumber
\end{eqnarray}
}
and
$$
\rho^{m}=\Big(\rho_{1,1}^{m},\ldots,\rho_{N_x,1}^{m},\rho_{1,2}^{m},\ldots,\rho_{N_x,2}^{m},
\ldots,\rho_{1,N_y}^{m},\ldots,\rho_{N_x,N_y}^{m}\Big)^{T},
$$
then (\ref{perturbation-equation}) can be written in matrix form as follows:
\begin{equation}\label{perturbation-equation2}
(I_{N}+J_{\alpha,\beta})\Psi^{m+1}=(I_{N}-J_{\alpha,\beta})\Psi^{m}+\frac{3}{2}\Delta t({\hat F}^{m}-F^{m})-\frac{1}{2}\Delta t({\hat F}^{m-1}-F^{m-1})+\Delta t \rho^{m},
\end{equation}
where
$$
\Psi^{m}=\Big(\psi_{1,1}^{m},\ldots,\psi_{N_x,1}^{m},\psi_{1,2}^{m},\ldots,\psi_{N_x,2}^{m},
\ldots,\psi_{1,N_y}^{m},\ldots,\psi_{N_x,N_y}^{m}\Big)^{T}
$$
with $\Psi^{0}={\bf 0}$.
A direct application of Eqs. (\ref{2D-U-UEQ-matrix-2})-(\ref{U-UEQ-matrix-5}) in Theorem \ref{unconditional-stability} to Eq. (\ref{perturbation-equation2}) yields
\begin{eqnarray}\footnotesize\label{global-error1}
||\Psi^{m+1}||-||\Psi^{m}||
\leq\Delta t\left[\frac{3}{2}||{\hat F}^{m}-F^{m}||+\frac{1}{2}||{\hat F}^{m-1}-F^{m-1}||\right]+\Delta t||\rho^{m}||.
\end{eqnarray}
Using the Lipschitz condition (\ref{Lipschitz-condition}), (\ref{global-error1}) can be rewritten as
\begin{eqnarray}\label{global-error2}
||\Psi^{m+1}||-||\Psi^{m}||&\leq&\Delta t \left(\frac{3\sqrt{L_2}}{2}||\Psi^{m}||+\frac{\sqrt{L^{'}_2}}{2}||\Psi^{m-1}||\right)+\Delta t||\rho^{m}||\nonumber\\
&\leq&\Delta t C_R\Big(||\Psi^{m}||+||\Psi^{m-1}||\Big)+\Delta t||\rho^{m}||,
\end{eqnarray}
where $C_{R}=\max\{\frac{3\sqrt{L_2}}{2},\frac{\sqrt{L^{'}_2}}{2}\}$. Summing up (\ref{global-error2}) for $m$ from $1$ to $k$ and noticing $||\Psi^{0}||=0$, we obtain
\begin{eqnarray}\label{global-error3}
||\Psi^{k+1}||&\leq&||\Psi^{1}||+\Delta t C_{R} \Bigg(\sum_{p=1}^{k}||\Psi^{p}||+\sum_{p=1}^{k}||\Psi^{p-1}||\Bigg)+\Delta t\sum_{p=1}^{k}||\rho^{p}||\nonumber\\
&\leq&||\Psi^{0}||+3\Delta t C_{R}\sum_{p=1}^{k}||\Psi^{p}||+\Delta t\sum_{p=1}^{k}||\rho^{p}||\nonumber\\
&=&3\Delta t C_{R}\sum_{p=1}^{k}||\Psi^{p}||+\Delta t\sum_{p=1}^{k}||\rho^{p}||.
\end{eqnarray}
Applying Gronwall's inequality in \cite{cheng2019novel} to (\ref{global-error3}), we have
\begin{eqnarray}\label{Gronwall-global-error}
||\Psi^{m+1}||&\leq&\exp(3 k \Delta t C_{R})\Big(\Delta t\sum_{p=1}^{k}||\rho^{p}||\Big)\nonumber\\
&\leq&TC^{*}\exp(3TC_{R})(\Delta t^2+h_x^4+h_y^4)\nonumber,
\end{eqnarray}
which completes the proof.
\end{proof}

\section{$\tau$-matrix based preconditioner and its spectral analysis}
In this section, we propose a $\tau$-matrix based preconditioner for the system (\ref{2D-matrix-form}), which is constructed based on the generation function $S_\gamma(\theta)$ of $A_\gamma$. Knowing that $e^{{\pm\bf i}\theta}=\cos(\theta)\pm{\bf i}\sin(\theta)$, then we have
\begin{equation}\label{Toeplitz-S}
\begin{aligned}
S_\gamma(\theta)&=\left[1+\frac{\gamma}{24}(2-e^{{\bf i}\theta}-e^{-{\bf i}\theta})\right](2-e^{{\bf i}\theta}-e^{-{\bf i}\theta})^\frac{\gamma}{2},~~ (\gamma=\alpha,\beta)\\
&=\left[1+\frac{\gamma}{24}(2-2\cos\theta)\right](2-2\cos\theta)^\frac{\gamma}{2}.\\
\end{aligned}
\end{equation}

Furthermore, we denote
\begin{equation}\label{Toeplitz-Q}
Q_\gamma=\left(\begin{array}{ccccc}
1 &  & & & \\
 & 1 &  & & \\
&  & \ddots &  & \\
& &  & 1 &  \\
& & &  & 1
\end{array}\right)+\frac{\gamma}{24}\left(\begin{array}{ccccc}
2 & -1 & & & \\
-1 & 2 & -1 & & \\
& \ddots & \ddots & \ddots & \\
& & -1 & 2 & -1 \\
& & & -1 & 2
\end{array}\right) \in \mathbb{R}^{N_z \times N_z}
\end{equation}
with $(\gamma,z)=(\alpha,x)$ or $(\beta,y)$. From the definition of generating function given in \cite{Jin1}, we know that the Toeplitz matrix $Q_\gamma$ is generated by the function $1+\frac{\gamma}{24}(2-2\cos\theta)$.
Again, from (\ref{Riesz_coef}), we see that $(2-2\cos\theta)^\frac{\gamma}{2}$ is the generating function of $\hat A _\gamma$ defined similarly as $A_\gamma$ in (\ref{Toeplitz-Walpha}), with the entries $s_i^{(\gamma)}$ replaced by $g_i^{(\gamma)}$, where
\begin{equation}
g_i^{(\gamma)}=\frac{1}{2\pi}\int_{-\pi}^{\pi}(2-e^{{\bf i}\theta}-e^{{\bf -i}\theta})^{\frac{\gamma}{2}e^{-{\bf i} k\theta}}d\theta=\frac{(-1)^{k}\Gamma(\gamma+1)}{\Gamma(\frac{\gamma}{2}-k+1)\Gamma(\frac{\gamma}{2}+k+1)}.
\end{equation}
More properties about $g_i^{(\gamma)}$ can be found in \cite{CD2}.

Moreover, inspired by the proposal in \cite{bini1990new}, it has been shown that $\tau(\hat A _\gamma)$ is a good preconditioner for the Toeplitz matrix $\hat A _\gamma$ in \cite{zhang2022fast}. Thus, in this paper, we define the following preconditioner for approximating $A_\gamma$:
\begin{equation}\label{ptau0}
P_{\gamma}:=Q_\gamma \tau(\hat A _\gamma).
\end{equation}

We remark that both $Q_\gamma$ and $\tau(\hat A _\gamma)$ are $\tau$ matrices possessing a symmetric structure, which can be diagonalized by the DST matrix $S_{N_z}$ and the elements of $S_{N_z}$ are calculated by
$$
[S_{N_z}]_{j,k}=\sqrt{\frac{2}{N_z+1}}\sin(\frac{\pi jk}{N_z+1}),\quad 1\leq j,k\leq N_z,
$$
see \cite{bini1990new} for more details. Therefore, $Q_\gamma$ and $\tau(\hat A _\gamma)$ commute, which shows that $P_{\gamma}$ is symmetric and can also be diagonalized by the DST matrix, i.e.,
\begin{equation}\label{decomp_pgamma}
P_{\gamma}=S_{N_z} \Lambda_{P_\gamma} S_{N_z},
\end{equation}
where
\begin{equation}\label{eig_pgamma}
\Lambda_{P_\gamma}=\Lambda_{Q_\gamma} \Lambda_{\tau(\hat A _\gamma)}.
\end{equation}
The matrices $\Lambda_{Q_\gamma}$ and $\Lambda_{\tau(\hat A _\gamma)}$ contain the eigenvalues of the matrices $Q_\gamma$ and $\tau(\hat A _\gamma)$, respectively.

Then, our $\tau$-matrix based preconditioner for the coefficient matrix $I_{N}+J_{\alpha,\beta}$ in (\ref{2D-matrix-form}) is defined as
\begin{equation}\label{ptau}
P_{\tau}=I_N+P(J_{\alpha,\beta}),
\end{equation}
where
\begin{equation}\label{tau_pre1}
P(J_{\alpha,\beta})=\eta_\alpha\big(I_{N_y}\otimes P_{\alpha}\big)
+\eta_\beta\big(P_{\beta}\otimes I_{N_x}\big)
\end{equation}
with $P_{\alpha}$ and $P_{\beta}$ defined in (\ref{ptau0}).

Based on the Eqs. (\ref{decomp_pgamma})-(\ref{tau_pre1}), it is easy to check that $P_{\tau}$ has the following decomposition
\begin{equation}\label{p_decomp}
P_{\tau}=(S_{N_y} \otimes S_{N_x}) \left(I_N + \eta_\alpha(I_{N_y}\otimes \Lambda_{P_\alpha}) + \eta_\beta(\Lambda_{P_\beta} \otimes I_{N_x}) \right) (S_{N_y} \otimes S_{N_x}).
\end{equation}
Therefore, for a given vector $v$,
the matrix-vector multiplication $P^{-1}_{\tau}v$ can be implemented within ${\mathcal O}(N_xN_y\log(N_xN_y))$ operations using fast DSTs.

In order to show the proposed preconditioner $P_{\tau}$ is well-defined, the invertibility of $P_{\tau}$ is given in Lemma \ref{pre_SPD} as follows.
\begin{lemma}\label{pre_SPD}
The $\tau$-matrix based preconditioner $P_{\tau}$ defined in (\ref{ptau}) is SPD and thus $P_{\tau}$ is invertible.
\end{lemma}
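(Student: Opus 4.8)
The plan is to leverage the explicit spectral decomposition (\ref{p_decomp}), which already does most of the work. First I would record that each sine-transform matrix $S_{N_z}$ is real, symmetric, and orthogonal, so the tensor factor $S_{N_y}\otimes S_{N_x}$ is itself real orthogonal and symmetric. Hence (\ref{p_decomp}) exhibits $P_\tau$ as $(S_{N_y}\otimes S_{N_x})\,\Lambda\,(S_{N_y}\otimes S_{N_x})$ with $\Lambda=I_N+\eta_\alpha(I_{N_y}\otimes\Lambda_{P_\alpha})+\eta_\beta(\Lambda_{P_\beta}\otimes I_{N_x})$ diagonal, which immediately shows $P_\tau=P_\tau^T$ and identifies the eigenvalues of $P_\tau$ with the diagonal entries of $\Lambda$. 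Since $\eta_\alpha,\eta_\beta>0$, positivity of these entries---and therefore the SPD property and invertibility of $P_\tau$---reduces to showing that every eigenvalue of $P_\alpha$ and of $P_\beta$ is strictly positive.

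Next I would use the factorization (\ref{eig_pgamma}), $\Lambda_{P_\gamma}=\Lambda_{Q_\gamma}\Lambda_{\tau(\hat A_\gamma)}$, to split the task into two independent positivity statements. For $Q_\gamma$, its definition (\ref{Toeplitz-Q}) writes it as $I_{N_z}$ plus $\tfrac{\gamma}{24}$ times the discrete-Laplacian matrix $\mathrm{tridiag}(-1,2,-1)$, whose eigenvalues are the classical values $2-2\cos\theta_j$ with $\theta_j=\tfrac{j\pi}{N_z+1}$, $j=1,\dots,N_z$. Because $1<\gamma<2$ and $2-2\cos\theta_j\ge 0$, the eigenvalues of $Q_\gamma$ are $1+\tfrac{\gamma}{24}(2-2\cos\theta_j)\ge 1>0$, so $Q_\gamma$ is SPD with no further work.

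The remaining and most delicate step is to certify that $\tau(\hat A_\gamma)$ has strictly positive eigenvalues. My approach is to note that $\hat A_\gamma$ is the symmetric Toeplitz matrix generated by the nonnegative symbol $(2-2\cos\theta)^{\gamma/2}$, hence is itself SPD, and then to argue that the $\tau$ operator preserves this positivity: the eigenvalues of $\tau(\hat A_\gamma)$ may be read as this symbol sampled on the interior grid $\theta_j=\tfrac{j\pi}{N_z+1}\in(0,\pi)$, which crucially avoids the unique zero of the symbol at $\theta=0$ and so remains strictly positive for every $1<\gamma<2$; this is precisely the point where one must invoke the structure of the $\tau$-algebra approximation and the positivity results already established in \cite{zhang2022fast,huang2023tau}. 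Once both factors are positive, their product $\Lambda_{P_\gamma}$ is positive, so $P_\alpha$ and $P_\beta$ are SPD; returning to the first paragraph, each eigenvalue of $P_\tau$ then equals $1+\eta_\alpha\lambda_i(P_\alpha)+\eta_\beta\lambda_j(P_\beta)>1>0$, which proves that $P_\tau$ is SPD and therefore invertible.
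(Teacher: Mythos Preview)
Your argument is correct and mirrors the paper's proof: both establish that $Q_\gamma$ is SPD (the paper via diagonal dominance, you via the explicit tridiagonal eigenvalues $1+\tfrac{\gamma}{24}(2-2\cos\theta_j)$), invoke \cite{zhang2022fast} for the positive definiteness of $\tau(\hat A_\gamma)$, and then combine through the commuting $\tau$-algebra/Kronecker structure, with your version leaning more directly on the decomposition (\ref{p_decomp}). One small caution: the heuristic that the eigenvalues of the Bini-type $\tau(\hat A_\gamma)$ are simply the generating function $(2-2\cos\theta)^{\gamma/2}$ sampled at the interior DST grid is not its defining property, so it is right that you ultimately rest that step on the cited positivity result rather than on the sampling claim.
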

\begin{proof}
It is obvious that $Q_\gamma$ is symmetric from (\ref{Toeplitz-Q}). Then, the matrix $Q_\gamma$ is SPD since it is diagonally dominant with positive diagonal entries. Moreover, from \cite{zhang2022fast}, we also know that $\tau(\hat A _\gamma)$ is SPD. Owing to the commutability of matrices $Q_\gamma$ and $\tau(\hat A _\gamma)$, we conclude that $P_{\gamma}$ is SPD matrix. Finally, combining (\ref{ptau})-(\ref{p_decomp}) with the properties of Kronecker product completes the proof.
\end{proof}

In the following, a few auxiliary lemmas are introduced to study the eigenvalues of the preconditioned matrix $P_\tau^{-1}(I_N+J_{\alpha,\beta})$.


\begin{lemma}{\rm (\cite{zhang2022fast})}\label{pre2FCD}
Assume that $\lambda(\tau^{-1}(\hat A _\gamma)\hat A _\gamma)$ be the eigenvalues of the matrix $\tau^{-1}(\hat A _\gamma)\hat A _\gamma$, then it holds that
\begin{equation}\nonumber
\frac{1}{2}<\lambda(\tau^{-1}(\hat A _\gamma)\hat A _\gamma)<\frac{3}{2}.
\end{equation}
\end{lemma}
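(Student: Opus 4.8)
The plan is to recast the eigenvalue enclosure as a Loewner sandwich and then to control a single correction term. Since $\hat A_\gamma$ is a symmetric Toeplitz matrix generated by the nonnegative symbol $f_\gamma(\theta):=(2-2\cos\theta)^{\gamma/2}\ge 0$, it is SPD, and $\tau(\hat A_\gamma)$ is SPD as well (see \cite{zhang2022fast} and Lemma \ref{pre_SPD}). Consequently the generalized eigenvalues $\lambda(\tau^{-1}(\hat A_\gamma)\hat A_\gamma)$ are real and positive and coincide with the spectrum of the symmetric matrix $\tau^{-1/2}(\hat A_\gamma)\,\hat A_\gamma\,\tau^{-1/2}(\hat A_\gamma)$. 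Hence the assertion is equivalent to the strict two-sided estimate
$$
\tfrac12\,\tau(\hat A_\gamma)\prec \hat A_\gamma\prec \tfrac32\,\tau(\hat A_\gamma),
$$
which, through Rayleigh quotients, reduces to showing $\tfrac12< x^{T}\hat A_\gamma x/\bigl(x^{T}\tau(\hat A_\gamma)x\bigr)<\tfrac32$ for every nonzero vector $x$.

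Next I would introduce the correction $H_\gamma:=\hat A_\gamma-\tau(\hat A_\gamma)$, the Hankel-type matrix that the $\tau$-projection removes from the Toeplitz matrix; its nonzero entries are built from the Fourier coefficients $g_k^{(\gamma)}$ of $f_\gamma$. Then $\tau^{-1}(\hat A_\gamma)\hat A_\gamma=I+\tau^{-1}(\hat A_\gamma)H_\gamma$, so it suffices to prove that the symmetrized correction $\tau^{-1/2}(\hat A_\gamma)H_\gamma\tau^{-1/2}(\hat A_\gamma)$ has spectral radius strictly below $1/2$, i.e. $-\tfrac12\tau(\hat A_\gamma)\prec H_\gamma\prec\tfrac12\tau(\hat A_\gamma)$. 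Here I would use that $\tau(\hat A_\gamma)=S_{N_z}\,\mathrm{diag}\bigl(f_\gamma(\theta_j)\bigr)\,S_{N_z}$ with $\theta_j=j\pi/(N_z+1)$, so that its quadratic form samples the symbol on the sine grid, whereas $x^{T}\hat A_\gamma x$ integrates $f_\gamma$ against a trigonometric polynomial; the correction $H_\gamma$ quantifies exactly this sampling-versus-integration gap.

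The core step is then to establish the two positivity statements $\tfrac12\tau(\hat A_\gamma)\pm H_\gamma\succ0$. I would exploit the sign and summation properties of the coefficients, namely $g_0^{(\gamma)}>0$, $g_k^{(\gamma)}<0$ for $k\ge1$, and $\sum_{k}g_k^{(\gamma)}=f_\gamma(0)=0$ (see \cite{CD2}), to identify $\tfrac12\tau(\hat A_\gamma)\pm H_\gamma$ as Toeplitz-plus-Hankel matrices attached to nonnegative generating functions, so that their positive definiteness follows either from the nonnegativity of those symbols or from a diagonal-dominance argument applied to the DST-transformed matrices. Strictness would come from $f_\gamma\not\equiv 0$, which keeps all sampled eigenvalues $f_\gamma(\theta_j)$ away from the boundary of the interval and prevents either inequality from being attained.

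The hard part will be the \emph{uniform} strict control $\|\tau^{-1/2}(\hat A_\gamma)H_\gamma\tau^{-1/2}(\hat A_\gamma)\|<1/2$, required for all matrix sizes $N_z$ and all $\gamma\in(1,2)$ simultaneously. The delicate feature is the singularity of $f_\gamma$ at $\theta=0$: the smallest eigenvalue $f_\gamma(\theta_1)$ of $\tau(\hat A_\gamma)$ tends to $0$ as $N_z\to\infty$, so the correction $H_\gamma$ must be shown small relative to a vanishing reference quantity. This rules out any crude norm bound on $H_\gamma$ in isolation and forces the comparison to be carried out at the level of symbols and quadratic forms rather than matrix norms, which is precisely the point where the structural properties of $g_k^{(\gamma)}$ must be used sharply.
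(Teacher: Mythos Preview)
The paper does not prove this lemma; it is quoted verbatim from \cite{zhang2022fast} (whose argument in turn rests on \cite{huang2021spectral}), so there is no ``paper's own proof'' to compare against beyond that citation. Your overall strategy---write $\hat A_\gamma=\tau(\hat A_\gamma)+H_\gamma$ with $H_\gamma$ the Hankel correction and reduce the claim to the two Loewner inequalities $\tfrac12\tau(\hat A_\gamma)\pm H_\gamma\succ0$---is exactly the route taken in those references, so at the level of architecture your plan is correct and aligned with the cited source.

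Where the proposal is genuinely incomplete is the mechanism you offer for the core positivity step. Saying that $\tfrac12\tau(\hat A_\gamma)\pm H_\gamma$ can be ``identified as Toeplitz-plus-Hankel matrices attached to nonnegative generating functions'' or handled by ``a diagonal-dominance argument applied to the DST-transformed matrices'' does not match how the inequality is actually closed in \cite{huang2021spectral,zhang2022fast}, and neither suggestion works as stated: a Toeplitz-plus-Hankel matrix does not have a single generating function in the sense needed here, and in the DST basis the correction $H_\gamma$ is not diagonal, so diagonal dominance there is not available. The cited proofs instead split $H_\gamma$ into its upper-left and lower-right anti-triangular blocks and compare each block \emph{entrywise} against rows of $\hat A_\gamma$; this comparison requires, beyond the sign pattern and zero-sum identity you list, the \emph{monotone decay} $|g_1^{(\gamma)}|\ge|g_2^{(\gamma)}|\ge\cdots$ of the fractional centered coefficients. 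That monotonicity is precisely what absorbs the Hankel row sums into the Toeplitz row sums uniformly in $N_z$ and resolves the ``vanishing smallest eigenvalue'' obstruction you correctly flag in your final paragraph. Without invoking it, the sign and summation properties alone do not deliver the strict bound.
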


\begin{lemma}\label{generatingAB}{\rm (\cite{serra1994preconditioning,serra1998extreme})}
Let $T_1$ and $T_2$ be the Toeplitz matrices generated by $f_{1}(\theta)$ and $f_{2}(\theta)$ with $f_{1}(\theta), f_{2}(\theta)\in L^{1}([-\pi,\pi]^2)$ being real almost everywhere, respectively. Then
\begin{equation}\nonumber
\widetilde{l}={\rm ess}\mathop{\inf}\limits_{\theta}\frac{f_2 (\theta)}{f_1(\theta)}\leq\lambda(T_{1}^{-1}T_2)\leq {\rm ess}\mathop{\sup}\limits_{\theta}\frac{f_2 (\theta)}{f_1(\theta)}= \widetilde{L},
\end{equation}
where {\rm essinf} and {\rm esssup} denote the essential infimum and the essential supremum, respectively. Moreover, if $\widetilde{l}<\widetilde{L}$, then the above estimates are strict, i.e.,
\begin{equation}\nonumber
\widetilde{l}<\lambda(T_{1}^{-1}T_2)<\widetilde{L}.
\end{equation}
\end{lemma}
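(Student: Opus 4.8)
The plan is to reduce the spectrum of the (generally nonsymmetric) product $T_1^{-1}T_2$ to a generalized Rayleigh quotient and then transfer the pointwise bounds on the ratio $f_2/f_1$ to the matrix level via the standard integral representation of Toeplitz quadratic forms. Since $f_1,f_2$ are real almost everywhere, both $T_1$ and $T_2$ are Hermitian; assuming $f_1>0$ almost everywhere (as holds in our application, where $f_1$ is a positive symbol), the matrix $T_1$ is positive definite, so $T_1^{-1}T_2$ is similar to the Hermitian matrix $T_1^{-1/2}T_2 T_1^{-1/2}$ and hence has real spectrum. Consequently every eigenvalue $\lambda(T_1^{-1}T_2)$ lies between the minimum and maximum over $x\neq 0$ of the generalized Rayleigh quotient $\frac{x^{*}T_2 x}{x^{*}T_1 x}$, and it suffices to bound this quotient.

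The key computational tool is the identity $x^{*}T_f x=\frac{1}{2\pi}\int_{-\pi}^{\pi}f(\theta)\,|p_x(\theta)|^2\,d\theta$, valid for any symbol $f$ and the trigonometric polynomial $p_x(\theta)=\sum_j x_j e^{{\bf i}j\theta}$; this follows by expressing $T_f$ through the Fourier coefficients of $f$ and recognizing the resulting double sum as $|p_x(\theta)|^2$. Applying this to both $f_1$ and $f_2$, the Rayleigh quotient becomes $\frac{\int f_2 |p_x|^2}{\int f_1 |p_x|^2}$. From the definitions of $\widetilde l$ and $\widetilde L$ we have the almost-everywhere bound $\widetilde l\, f_1(\theta)\le f_2(\theta)\le \widetilde L\, f_1(\theta)$; multiplying by the nonnegative weight $|p_x(\theta)|^2$, integrating, and dividing by the positive quantity $\int f_1 |p_x|^2$ immediately yields $\widetilde l\le \frac{x^{*}T_2 x}{x^{*}T_1 x}\le \widetilde L$ for every $x\neq 0$, which gives the non-strict two-sided estimate.

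For the strict inequalities under the hypothesis $\widetilde l<\widetilde L$, I would argue by contradiction: suppose some eigenvector attains the upper bound, i.e. $\int(\widetilde L f_1-f_2)|p_x|^2\,d\theta=0$. The integrand is nonnegative a.e. (since $f_2\le \widetilde L f_1$ and $|p_x|^2\ge 0$), so it must vanish almost everywhere. Because $p_x$ is a nonzero trigonometric polynomial it can vanish only on a finite set, hence $|p_x|^2>0$ a.e.; this forces $f_2=\widetilde L f_1$ a.e., i.e. the ratio $f_2/f_1$ is essentially constant and equal to $\widetilde L$, contradicting $\widetilde l<\widetilde L$. The symmetric argument rules out attainment of $\widetilde l$. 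I expect this strictness step to be the main obstacle, as it hinges on the nonvanishing of trigonometric polynomials off a null set to exclude concentration of the quadratic form at the extremal level; the non-strict part, by contrast, is a direct consequence of the integral representation and is essentially routine.
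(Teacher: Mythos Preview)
Your argument is correct and is essentially the classical proof of this result due to Serra-Capizzano: reduce to the generalized Rayleigh quotient via the similarity $T_1^{-1}T_2\sim T_1^{-1/2}T_2T_1^{-1/2}$, use the integral representation $x^{*}T_f x=\frac{1}{2\pi}\int f|p_x|^2$, and obtain strictness from the fact that a nonzero trigonometric polynomial vanishes only on a null set. The paper itself does not supply a proof of this lemma; it is quoted directly from the cited references \cite{serra1994preconditioning,serra1998extreme}, so there is no in-paper argument to compare against beyond noting that your write-up matches the standard proof in those sources.
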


\begin{lemma}\label{generatingvalueAB}
Let $S_\gamma(\theta)$ and $G_\gamma(\theta)$ be the generating functions of $A_\gamma$ and $\hat{A}_\gamma$, respectively. For $1<\gamma<2$, it holds that
\begin{equation}\nonumber
{\rm ess}\mathop{\inf}\limits_{\theta}\frac{S_\gamma(\theta)}{G_\gamma(\theta)}\geq 1 \quad \mbox{and}\quad{\rm ess}\mathop{\sup}\limits_{\theta}\frac{S_\gamma(\theta)}{G_\gamma(\theta)}\leq \frac{4}{3}.
\end{equation}
\end{lemma}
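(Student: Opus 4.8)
The plan is to reduce both inequalities to an elementary estimate of a single nonnegative trigonometric expression, exploiting the fact that $S_\gamma$ and $G_\gamma$ share a common factor. From (\ref{Toeplitz-S}) the generating function of $A_\gamma$ is
\begin{equation}\nonumber
S_\gamma(\theta)=\left[1+\frac{\gamma}{24}(2-2\cos\theta)\right](2-2\cos\theta)^{\frac{\gamma}{2}},
\end{equation}
while, as recorded after (\ref{Riesz_coef}), the generating function of $\hat A_\gamma$ is $G_\gamma(\theta)=(2-2\cos\theta)^{\frac{\gamma}{2}}$. The key observation is that the factor $(2-2\cos\theta)^{\gamma/2}$ is common to numerator and denominator, so at every $\theta$ for which $G_\gamma(\theta)\neq 0$ the quotient collapses to
\begin{equation}\nonumber
\frac{S_\gamma(\theta)}{G_\gamma(\theta)}=1+\frac{\gamma}{24}(2-2\cos\theta).
\end{equation}

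First I would record that $G_\gamma(\theta)=0$ only at $\theta=0$, a single point of measure zero, so it is irrelevant to the essential infimum and supremum; on the complement the quotient is exactly the affine-in-$\cos\theta$ function displayed above, which in fact extends continuously to $\theta=0$ with value $1$. Next I would analyze the range of $2-2\cos\theta$ on $[-\pi,\pi]$: since $\cos\theta\in[-1,1]$, the quantity $2-2\cos\theta$ runs over $[0,4]$, attaining $0$ at $\theta=0$ and $4$ at $\theta=\pm\pi$. Substituting these endpoints into the affine expression produces the two extreme values $1$ and $1+\frac{\gamma}{6}$.

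It then follows at once that the essential infimum of $S_\gamma/G_\gamma$ equals $1$, which is the first bound, and that the essential supremum equals $1+\frac{\gamma}{6}$. For the second bound I would invoke the hypothesis $1<\gamma<2$: since the expression is increasing in $\gamma$, we get $1+\frac{\gamma}{6}<1+\frac{2}{6}=\frac{4}{3}$, so the supremum lies strictly below $4/3$ and in particular satisfies $\leq 4/3$, as claimed.

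There is no serious obstacle here; the computation is entirely elementary once the cancellation is spotted. The only point requiring a little care is the simultaneous vanishing of both generating functions at $\theta=0$, which I would dispose of by passing to essential inf/sup (equivalently, by using the continuous extension of the quotient) rather than by evaluating the indeterminate form $0/0$ directly. This lemma then feeds, via Lemma \ref{generatingAB} applied with $f_1=G_\gamma$ and $f_2=S_\gamma$, into the spectral localization of $\tau^{-1}(\hat A_\gamma)A_\gamma$ used subsequently.
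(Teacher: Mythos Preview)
Your proof is correct and follows essentially the same route as the paper: cancel the common factor $(2-2\cos\theta)^{\gamma/2}$ and bound the remaining affine-in-$\cos\theta$ term using $2-2\cos\theta\in[0,4]$ (the paper writes this equivalently via the half-angle identity as $\frac{\gamma}{6}\sin^2(\theta/2)\in[0,\gamma/6]$). Your treatment of the measure-zero vanishing at $\theta=0$ is a bit more explicit than the paper's, and your closing remark should read $\hat A_\gamma^{-1}A_\gamma$ rather than $\tau^{-1}(\hat A_\gamma)A_\gamma$, but the argument itself is the same.
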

\begin{proof}
From (\ref{Toeplitz-S}), we have
\begin{eqnarray}\label{Stheta}
S_\gamma(\theta)
&=&\left[1+\frac{\gamma}{24}(2-e^{{\bf i}\theta}-e^{-{\bf i}\theta})\right](2-e^{{\bf i}\theta}-e^{-{\bf i}\theta})^\frac{\gamma}{2}\nonumber\\
&=&\left[1+\frac{\gamma}{24}(2-2\cos\theta)\right](2-2\cos\theta)^\frac{\gamma}{2}\nonumber\\
&=&\left[1+\frac{\gamma}{6}\sin^2(\frac{\theta}{2})\right]\cdot\left[2\sin(\frac{\theta}{2})\right]^{\gamma}.
\end{eqnarray}
Note that
\begin{eqnarray}\label{Gtheta}
G_\gamma(\theta)&=&\sum_{k=-\infty}^{\infty}g_{k}^{(\gamma)}e^{{\bf i}k\theta}
=(2-e^{{\bf i}\theta}-e^{-{\bf i}\theta})^\frac{\gamma}{2}=\left[2\sin(\frac{\theta}{2})\right]^{\gamma},
\end{eqnarray}
then combining (\ref{Stheta}) and (\ref{Gtheta}), we obtain
\begin{equation}\nonumber
\frac{S_\gamma(\theta)}{G_\gamma(\theta)}=1+\frac{\gamma}{6}\sin^2(\frac{\theta}{2}).
\end{equation}
Hence,
\begin{equation}\nonumber
\hbox{ess\,$\mathop{\inf}\limits_{\theta}$}\frac{S_\gamma(\theta)}{G_\gamma(\theta)}\geq 1+\frac{1}{6}\cdot0=1,\quad \mbox{and}\quad \mbox{ess\,$\mathop{\sup}\limits_{\theta}$}\frac{S_\gamma(\theta)}{G_\gamma(\theta)}\leq 1+\frac{2}{6}\cdot1=\frac{4}{3},
\end{equation}
which completes the proof.
\end{proof}

From Lemmas \ref{generatingAB} and \ref{generatingvalueAB}, we conclude that
\begin{equation}\label{ineq_1}
1<\lambda(\hat{A}_\gamma^{-1}A_\gamma)<\frac{4}{3}.
\end{equation}

\begin{lemma}\label{1D_bound}
Let $\lambda(P_\gamma^{-1}A_\gamma)$ be an eigenvalue of the matrix $P_\gamma^{-1}A_\gamma$, then
\begin{equation}\nonumber
\frac{3}{8}<\lambda(P_\gamma^{-1}A_\gamma)<2.
\end{equation}
\end{lemma}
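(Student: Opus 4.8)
The plan is to use that all the matrices in play, namely $A_\gamma$, $\hat A_\gamma$, $\tau(\hat A_\gamma)$, $Q_\gamma$ and hence $P_\gamma=Q_\gamma\tau(\hat A_\gamma)$, are SPD. Consequently $P_\gamma^{-1}A_\gamma$ is similar to the SPD matrix $P_\gamma^{-1/2}A_\gamma P_\gamma^{-1/2}$, its spectrum is real and positive, and its extreme eigenvalues equal the infimum and supremum over nonzero $x$ of the generalized Rayleigh quotient $R(x)=\frac{x^{T}A_\gamma x}{x^{T}P_\gamma x}$. The first step is to insert the two intermediate matrices $\hat A_\gamma$ and $\tau(\hat A_\gamma)$ to factor this quotient, for every $x\neq 0$, as
$$
R(x)=\frac{x^{T}A_\gamma x}{x^{T}\hat A_\gamma x}\cdot\frac{x^{T}\hat A_\gamma x}{x^{T}\tau(\hat A_\gamma)x}\cdot\frac{x^{T}\tau(\hat A_\gamma)x}{x^{T}P_\gamma x},
$$
where each of the three factors is strictly positive precisely because all five matrices are SPD.

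Next I would bound the three factors one by one. The first factor is a generalized Rayleigh quotient for the pencil $(A_\gamma,\hat A_\gamma)$, hence confined to $\big(\lambda_{\min}(\hat A_\gamma^{-1}A_\gamma),\lambda_{\max}(\hat A_\gamma^{-1}A_\gamma)\big)\subset(1,\tfrac43)$ by (\ref{ineq_1}). The second factor is confined to $(\tfrac12,\tfrac32)$ directly by Lemma \ref{pre2FCD}. For the third factor the key observation is that $Q_\gamma$ and $\tau(\hat A_\gamma)$ are commuting $\tau$ matrices (both diagonalized by the DST), so that $P_\gamma^{-1}\tau(\hat A_\gamma)=\tau(\hat A_\gamma)^{-1}Q_\gamma^{-1}\tau(\hat A_\gamma)=Q_\gamma^{-1}$; therefore the third factor ranges over the spectrum of $Q_\gamma^{-1}$. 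Since $Q_\gamma$ is the $\tau$ matrix generated by $1+\frac{\gamma}{24}(2-2\cos\theta)=1+\frac{\gamma}{6}\sin^2(\tfrac{\theta}{2})$, its eigenvalues are the samples of this function at the DST grid points $\theta_j=\frac{j\pi}{N_z+1}\in(0,\pi)$; by the computation already carried out in Lemma \ref{generatingvalueAB} these samples lie strictly in $(1,\tfrac43)$ for $1<\gamma<2$, so the eigenvalues of $Q_\gamma^{-1}$ lie in $(\tfrac34,1)$.

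Finally, because $R(x)$ is a product of three positive quantities, each uniformly confined to its interval, the product is uniformly confined to the product interval: its infimum is bounded below by $1\cdot\frac12\cdot\frac34=\frac38$ and its supremum bounded above by $\frac43\cdot\frac32\cdot 1=2$. Taking the infimum and supremum over $x$ then gives $\frac38<\lambda(P_\gamma^{-1}A_\gamma)<2$, as claimed.

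I expect the \textbf{main obstacle} to be the treatment of the third factor: one must recognize the algebraic collapse $P_\gamma^{-1}\tau(\hat A_\gamma)=Q_\gamma^{-1}$, which relies essentially on the simultaneous diagonalizability of $Q_\gamma$ and $\tau(\hat A_\gamma)$ by the DST, and then read off the exact spectrum of the tridiagonal $\tau$ matrix $Q_\gamma$. The only remaining subtlety is to justify multiplying the three factorwise bounds, which is legitimate exactly because SPD-ness forces every factor to be positive, so that the extreme values of the product are controlled by the products of the extreme values rather than requiring the three optimizing vectors to coincide.
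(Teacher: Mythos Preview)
Your proposal is correct and follows essentially the same route as the paper: the same similarity reduction to a Rayleigh quotient, the same three-factor splitting through $\hat A_\gamma$ and $\tau(\hat A_\gamma)$, and the same bounds $(1,\tfrac43)$, $(\tfrac12,\tfrac32)$, $(\tfrac34,1)$ on the three factors via (\ref{ineq_1}), Lemma~\ref{pre2FCD}, and the commutativity collapse $P_\gamma^{-1}\tau(\hat A_\gamma)=Q_\gamma^{-1}$. Your treatment of the third factor is in fact slightly more explicit than the paper's, which simply asserts $\lambda(Q_\gamma)\in(1,\tfrac43)$ ``by straightforward calculation,'' whereas you identify the eigenvalues as DST-grid samples of the generating function.
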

\begin{proof}
Since $P_\gamma^{-1}A_\gamma$ is similar to the symmetric matrix $P_\gamma^{-\frac{1}{2}}A_\gamma P _\gamma^{-\frac{1}{2}}$, it is equivalent to consider the eigenvalues of $P_\gamma^{-\frac{1}{2}}A_\gamma P _\gamma^{-\frac{1}{2}}$. Then, for arbitrary non-zero vector $x\in {\mathbb R}^N$,
\begin{equation}\label{lambdaquadratic}
\frac{x^T P_\gamma^{-\frac{1}{2}}A_\gamma P _\gamma^{-\frac{1}{2}} x}{x^T x} \xlongequal[]{y=P_\gamma^{-\frac{1}{2}}x}\frac{y^T A_\gamma y}{y^T P_\gamma y}=\frac{y^T A_\gamma y}{y^T \hat{A}_\gamma y}\cdot \frac{y^T  \hat{A}_\gamma y}{y^T \tau(\hat{A}_\gamma) y}\cdot \frac{y^T \tau(\hat{A}_\gamma) y}{y^T P_\gamma y}.
\end{equation}
From (\ref{ineq_1}) and Rayleigh quotient theorem, see \cite{horn2012matrix}, we know that
\begin{equation}\label{ineq_2}
1<\lambda_{\min}(\hat{A}_\gamma^{-1}A_\gamma)\leq\frac{y^T A_\gamma y}{y^T \hat{A}_\gamma y}\leq\lambda_{\max}(\hat{A}_\gamma^{-1}A_\gamma)<\frac{4}{3}.
\end{equation}
On the other hand, Lemma \ref{pre2FCD} implies
\begin{equation}\label{ineq_4}
\frac{1}{2}<\frac{y^T  \hat{A}_\gamma y}{y^T \tau(\hat{A}_\gamma) y}<\frac{3}{2}.
\end{equation}
By straightforward calculation, it concludes from (\ref{Toeplitz-Q}) that $\lambda({Q_{\gamma}})\in(1,\frac{4}{3})$. Moreover, from (\ref{ptau0}) and the fact that $Q_\gamma$ and $\tau(\hat A _\gamma)$ commute, we have
\begin{equation}\label{ineq_3}
\begin{aligned}
\frac{3}{4}<\lambda_{\min}(Q_\gamma^{-1})=\lambda_{\min}(P_\gamma^{-1}\tau(\hat{A}_\gamma))\leq\frac{y^T \tau(\hat{A}_\gamma) y}{y^T P_\gamma z}\leq\lambda_{\max}(P_\gamma^{-1}\tau(\hat{A}_\gamma))
=\lambda_{\max}(Q_\gamma^{-1})<1.\\
\end{aligned}
\end{equation}
At last, combining (\ref{lambdaquadratic})-(\ref{ineq_3}), we deduce that
\begin{equation}\nonumber
\frac{3}{8}=1\cdot\frac{1}{2}\cdot\frac{3}{4}<\lambda(P_\gamma^{-1}A_\gamma)<\frac{4}{3}\cdot\frac{3}{2}\cdot1  =2,
\end{equation}
which completes the proof.
\end{proof}

Moreover, since
\begin{eqnarray}\nonumber
(I_{N_y}\otimes P_\alpha\big)^{-1}(I_{N_y}\otimes A_\alpha)
=(I_{N_y}^{-1}\otimes P_\alpha^{-1})(I_{N_y}\otimes A_\alpha)
=I_{N_y}\otimes P_\alpha^{-1}A_\alpha,
\end{eqnarray}
we have
\begin{equation}\label{eig-AinvA-1}
\frac{3}{8}<\lambda\big(I_{N_y}\otimes P_\alpha^{-1}A_\alpha\big)<2.
\end{equation}
Similarly,
\begin{equation}\label{eig-AinvA-2}
\frac{3}{8}<\lambda\big(P_\beta^{-1}A_\beta\otimes I_{N_x}\big)<2.
\end{equation}
By Lemma \ref{1D_bound}, one can show that the spectrum of the preconditioned matrix $P_\tau^{-1}(I_N+J_{\alpha,\beta})$ is uniformly bounded in the interval $(3/8,2)$ for all $N$.
\begin{theorem}\label{eigenprematrix}
Let $\lambda(P_\tau^{-1}(I_N+J_{\alpha,\beta}))$ be an eigenvalue of the matrix $P_\tau^{-1}(I_N+J_{\alpha,\beta})$, then
\begin{equation}\nonumber
\frac{3}{8}<\lambda(P_\tau^{-1}(I_N+J_{\alpha,\beta}))<2.
\end{equation}
\end{theorem}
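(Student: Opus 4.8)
The plan is to exploit that $P_\tau$ is SPD (Lemma \ref{pre_SPD}), so that the eigenvalues of $P_\tau^{-1}(I_N+J_{\alpha,\beta})$ coincide with those of the symmetric matrix $P_\tau^{-\frac{1}{2}}(I_N+J_{\alpha,\beta})P_\tau^{-\frac{1}{2}}$ and hence can be characterized through the generalized Rayleigh quotient
$$
R(x)=\frac{x^T(I_N+J_{\alpha,\beta})x}{x^T P_\tau x},\qquad x\in\mathbb{R}^N\setminus\{0\}.
$$
It then suffices to prove that $\frac{3}{8}<R(x)<2$ for every nonzero $x$, since this pins the whole spectrum into the stated open interval.

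First I would expand both quadratic forms using the definitions of $J_{\alpha,\beta}$ and $P_\tau$ together with the Kronecker structure, writing the numerator as $x^Tx+\eta_\alpha x^T(I_{N_y}\otimes A_\alpha)x+\eta_\beta x^T(A_\beta\otimes I_{N_x})x$ and the denominator as $x^Tx+\eta_\alpha x^T(I_{N_y}\otimes P_\alpha)x+\eta_\beta x^T(P_\beta\otimes I_{N_x})x$. Because $A_\gamma$ and $P_\gamma$ are SPD, their tensor liftings are SPD as well; thus each of the six quadratic forms is strictly positive for $x\neq 0$, and combined with $\eta_\alpha,\eta_\beta>0$ this ensures that every term entering the denominator is strictly positive.

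Next I would invoke the one-dimensional bounds already established. From (\ref{eig-AinvA-1}) and (\ref{eig-AinvA-2}), together with the Rayleigh quotient characterization applied to the SPD pairs (and the identities $(I_{N_y}\otimes P_\alpha)^{-1}(I_{N_y}\otimes A_\alpha)=I_{N_y}\otimes P_\alpha^{-1}A_\alpha$, and its analogue in $y$), one obtains
$$
\frac{3}{8}<\frac{x^T(I_{N_y}\otimes A_\alpha)x}{x^T(I_{N_y}\otimes P_\alpha)x}<2,\qquad
\frac{3}{8}<\frac{x^T(A_\beta\otimes I_{N_x})x}{x^T(P_\beta\otimes I_{N_x})x}<2,
$$
while the identity contribution trivially has ratio $x^Tx/x^Tx=1\in(3/8,2)$.

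Finally, the main mechanism is the mediant (``ratio of sums'') inequality: given positive denominator terms $d_0=x^Tx$, $d_1=\eta_\alpha x^T(I_{N_y}\otimes P_\alpha)x$, $d_2=\eta_\beta x^T(P_\beta\otimes I_{N_x})x$ with matching numerator terms whose individual ratios all lie strictly in $(3/8,2)$, the ratio of the sums $R(x)$ again lies strictly in $(3/8,2)$. Concretely, the lower bound follows by bounding each numerator term below by $\tfrac{3}{8}$ times its matching denominator term and summing, and the upper bound follows symmetrically with the factor $2$; strictness is preserved because all three denominator contributions are strictly positive and the identity ratio $1$ already sits strictly inside $(3/8,2)$. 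I do not anticipate a genuine obstacle: once Lemma \ref{1D_bound} and its tensor extensions are in hand, the only care required is to confirm the strict positivity of every denominator term so that the strict form of the mediant inequality applies, and to keep the Kronecker commutation identities straight throughout.
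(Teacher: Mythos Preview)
Your proposal is correct and follows essentially the same route as the paper: both arguments reduce to the generalized Rayleigh quotient, invoke the tensor-lifted one-dimensional bounds (\ref{eig-AinvA-1})--(\ref{eig-AinvA-2}) to trap each of the three summand ratios in $(3/8,2)$, and then combine them via the elementary ``ratio of sums'' inequality (which the paper writes out by clearing denominators and adding). The only cosmetic difference is that you name the mediant inequality explicitly, whereas the paper carries out the same step by multiplying through and summing.
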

\begin{proof}
By the Rayleigh quotient theorem and (\ref{eig-AinvA-1})-(\ref{eig-AinvA-2}), it holds that
\begin{equation}\nonumber
\frac{3}{8}<\lambda_{\min}\big(I_{N_y}\otimes P_{\alpha}^{-1}A_\alpha\big)\leq\frac{v^{T}(I_{N_y}\otimes{A}_{\alpha})v}{v^{T} (I_{N_y}\otimes P_{\alpha}) v}\leq \lambda_{\max}\big(I_{N_y}\otimes P_{\alpha}^{-1}A_\alpha\big)<2,
\end{equation}
and
\begin{equation}\nonumber
\frac{3}{8}<\lambda_{\min}\big(P_{\beta}^{-1}A_\beta\otimes I_{N_x}\big)\leq\frac{v^{T}( A_{\beta}\otimes I_{N_x})v}{v^{T} \big(P_{\beta}\otimes I_{N_x}\big)v}\leq \lambda_{\max}\big(P_{\beta}^{-1}A_\beta\otimes I_{N_x}\big)<2.
\end{equation}
Then
\begin{equation}\label{tensor_ineq_1-1}
\frac{3}{8}v^{T}\eta_\alpha(I_{N_y}\otimes P_{\alpha}) v< v^{T}\eta_\alpha(I_{N_y}\otimes{A}_{\alpha})v<2 v^{T}\eta_\alpha(I_{N_y}\otimes P_{\alpha}) v,
\end{equation}
and
\begin{equation}\label{tensor_ineq_2-2}
\frac{3}{8} v^{T}\eta_\beta\big(P_{\beta}\otimes I_{N_x}\big)v< v^{T}\eta_\beta(A_{\beta}\otimes I_{N_x})v<2v^{T}\eta_\beta\big(P_{\beta}\otimes I_{N_x}\big)v.
\end{equation}
Furthermore, recall that
$$
J_{\alpha,\beta}=\eta_\alpha (I_{N_y}\otimes{A}_{\alpha})+\eta_\beta (A_{\beta}\otimes I_{N_x})
$$
and
$$
P_{\tau}=I_{N}+\eta_\alpha\big(I_{N_y}\otimes P_{\alpha}\big)
+\eta_\beta\big(P_{\beta}\otimes I_{N_x}\big),
$$
which combined with (\ref{tensor_ineq_1-1}) and (\ref{tensor_ineq_2-2}) imply
\begin{equation}\nonumber
\frac{3}{8}v^{T}P_{\tau}v<v^{T}(I_{N}+J_{\alpha,\beta})v<2v^{T}P_{\tau}v.
\end{equation}
Thus,
\begin{eqnarray}\nonumber
\frac{3}{8}<\frac{v^{T}(I_N+J_{\alpha,\beta})v}{v^{T}P_{\tau}v}<2,
\end{eqnarray}
namely,
\begin{equation}\nonumber
\lambda_{\min}\Big(P_{\tau}^{-1}(I_{N}+J_{\alpha,\beta})\Big)=\min\frac{v^{T}(I_N+J_{\alpha,\beta})v}{v^{T}P_{\tau} v}>\frac{3}{8},
\end{equation}
and
\begin{equation}\nonumber
\lambda_{\max}\Big(P_{\tau}^{-1}(I_{N}+J_{\alpha,\beta})\Big)=\max\frac{v^{T}(I_N+J_{\alpha,\beta})v}{v^{T}P_{\tau} v}<2,
\end{equation}
which completes the proof.
\end{proof}

Since the spectra of the preconditioned matrix are uniformly bounded, the PCG method for solving (\ref{2D-matrix-form}) converges linearly and the number of iterations is independent of the mesh sizes. Meanwhile, it is worth emphasizing that the construction of $P_{\tau}$ defined in (\ref{ptau}) and the spectral analysis of the corresponding preconditioned matrix are different from that proposed in \cite{huang2023tau}. The main differences consist of the following aspects:
\begin{itemize}
\item[(i)] Constructing $P_{\tau}$ is based on the generating functions of the Toeplitz matrices $A_{\alpha}$ and $A_{\beta}$, while the $\tau$-preconditioner proposed in \cite{huang2023tau} is directly designed from the entries of the symmetric Toeplitz matrices $A_{\alpha}$ and $A_{\beta}$.
\item[(ii)] With the help of the generating functions of $A_{\alpha}$ and $A_{\beta}$, all eigenvalues of the preconditioned matrix $P_{\tau}^{-1}(I_{N}+J_{\alpha,\beta})$ are proved to be strictly greater than $3/8$ and less than $2$, as opposed to the technique in \cite{huang2023tau}, where the related analysis relies on the properties of the matrix entries $s_k^{(\alpha)}$ and $s_k^{(\beta)}$.
    \item[(iii)] As shown in \cite{huang2021spectral}, the monotonicity of the coefficients of low-order schemes plays an important role in proving the uniform boundedness of the spectrum of the preconditioned matrix. However, the coefficients of high-order schemes usually lack this critical property, so for the $\tau$-preconditioner proposed in \cite{huang2023tau} for high-order schemes, the uniform boundedness of the spectrum of the preconditioned matrix is valid only for $1<\alpha, \beta<1.4674$ by using the technique given in \cite{huang2021spectral}. As a remedy, we propose a new $\tau$-matrix based preconditioner $P_{\tau}$ which guarantees all eigenvalues of the preconditioned matrix are uniformly bounded for all $1<\alpha, \beta<2$ (see Theorem 4.1 for more details).
\end{itemize}

\section{Numerical experiments}
In this section, some numerical results are reported to verify the accuracy of the scheme (\ref{2D-matrix-form}) and the efficiency of our proposed $\tau$-matrix based preconditioner. All computations are implemented in a Lenovo desktop with 16GB RAM, AMD Ryzen 5 4600U with Radeon Graphics $@$ 2.10 GHz using MATLAB R2019a.

To effectively demonstrate the performance of our proposed $\tau$-matrix based preconditioner, four numerical methods for solving the scheme (\ref{2D-matrix-form}) are reported as comparisons. For brevity, the unpreconditioned CG method is denoted by `CG'; while `PCG(T)' and `PCG(S)' mean the CG methods with two kinds of circulant-type preconditioners (by substituting $P_{\gamma}$ in (\ref{tau_pre1}) with T. Chan's and Strang's circulant preconditioners constructed from the Toeplitz matrix $A_\gamma$, respectively; see \cite{chan1989toeplitz,chan2007introduction}), and `PCG($\tau$)' represents the CG method with our proposed $\tau$-matrix based preconditioner $P_{\tau}$ defined in (\ref{ptau}). 
In this section, we set $h_x=h_y=h$ for the related numerical experiments. Then, the discrete $L_2$-norm for the error is defined 
as
\begin{equation*}
{\rm E}(h,\Delta t)=\sqrt{h^2\sum_{i=1}^{N_x}\sum_{j=1}^{N_y}(\psi_{i,j}^M)^2},
\end{equation*}
where $\psi_{i,j}^m$ is defined in (\ref{psi_error}) at grid points $(x_i,y_j,t_M)$. The running CPU time (in seconds) for solving (\ref{2D-matrix-form}) is denoted by `CPU(s)'. The average iterative numbers
of the different numerical methods over all time levels are denoted by `Iter', that is,
 \begin{equation*}
\text { Iter }=\frac{1}{M} \sum_{m=1}^{M} \operatorname{Iter}(m).
\end{equation*}
The numerical results will not be reported when the number of iterations is more than 1000.

Also, the convergence orders in time and space are measured as follows
\begin{equation*}
R_{\Delta t}=\log _{2}\left(\frac{\operatorname{E}\left(h, \Delta t\right)}{\operatorname{E}\left(\frac{h}{2}, \frac{\Delta t}{2}\right)}\right), \,R_{h}=\log _{2}\left(\frac{\operatorname{E}\left(h, \Delta t\right)}{\operatorname{E}\left(\frac{h}{2}, \Delta t\right)}\right).
\end{equation*}

For the CG and PCG methods, at each time step, we let the initial guess be zero vector, and stop 
the iteration when
$$
\frac{\|r_k\|}{\|r_0\|}\leq 10^{-10}
$$
with the $k$-th iteration residual vector $r_k$.
Furthermore, 
all the matrix-vector multiplications are computed in ${\mathcal O}(N_xN_y\log(N_xN_y))$ operations via the MATLAB built-in functions {\bf fft}, {\bf ifft} and {\bf dst} for improving the computational efficiency. 

Consider the following 2D space-fractional Fisher's equation in
\cite{cheng2019novel,zhu2017numerical,khader2018numerical}
\begin{equation*}\label{example1}
\frac{\partial u(x,y,t)}{\partial t}=K_{\alpha} \frac{\partial^{\alpha} u(x,y,t)}{\partial|x|^{\alpha}}+K_{\beta} \frac{\partial^{\beta} u(x,y,t)}{\partial|y|^{\beta}}+u(1-u)+f(x, y, t), \quad(x, y, t) \in\Omega \times(0, T],
\end{equation*}
subject to
\begin{align*}
&u(x, y, 0)=\rho x^{5}(1-x)^{5} y^{5}(1-y)^{5},\quad(x, y) \in \Omega,\\
&u(x, y, t)=0,\quad(x, y, t) \in({\mathbb{R}^2}\textbackslash\Omega)\times(0,T],
\end{align*}
where $K_{\alpha}=5$, $K_{\beta}=30$, $\Omega=(0,1)\times(0,1)$, $T=1$, and $\rho=10^5$.
The exact solution is
\begin{equation*}
u(x, y, t)=\rho e^{-t} x^{5}(1-x)^{5} y^{5}(1-y)^{5},
\end{equation*}
then the nonlinear term is 
\begin{equation*}
\begin{aligned}
f(x, y, t,u)=& \rho e^{-t}\Big\{\frac{K_{\alpha} }{2\cos(\frac{\alpha\pi}{2})}  y^{5}(1-y)^{5} \sum_{k=0}^{5}(-1)^{5-k} C_{5}^{k} \frac{\Gamma(11-k)}{\Gamma(11-k-\alpha)} \Big[x^{10-k-\alpha}+(1-x)^{10-k-\alpha}\Big]\\
&+\frac{K_{\beta}}{2\cos(\frac{\beta\pi}{2})} x^{5}(1-x)^{5} \sum_{k=0}^{5}(-1)^{5-k} C_{5}^{k} \frac{\Gamma(11-k)}{\Gamma(11-k-\beta)}
\Big[y^{10-k-\beta}+(1-y)^{10-k-\beta}\Big] \\
&- x^{5}(1-x)^{5} y^{5}(1-y)^{5}\Big\}-u(1-u).
\end{aligned}
\end{equation*}

To validate our proposed scheme (\ref{2D-matrix-form}), the errors and the convergence orders of the scheme (\ref{2D-matrix-form}) in the temporal and spatial directions for different $\alpha$ and $\beta$ is shown in Tables \ref{table1} $\&$ \ref{table2}, respectively. First, the numerical accuracy in time is computed,
we repeatedly reduce the values of $h$ and $\Delta t$ by half from $h=\Delta t=\frac{1}{64}$ to $h=\Delta t=\frac{1}{1024}$. For the accuracy in space, we take the fixed and sufficiently large 
$M=10000$ and double the values of $N_x+1$ and $N_y+1$.
As can be seen from Tables \ref{table1} $\&$ \ref{table2}, the second-order accuracy in time and fourth-order accuracy in space are verified.

In addition, 
we make a comparison of the fourth-order scheme (\ref{2D-matrix-form}) with the second-order scheme proposed in \cite{zhang2022fast}.
From Tables \ref{table1} $\&$ \ref{table2},
one can see that
to obtain a numerical solution for a given error, matrix size of the linear systems needed to be solved for the high-order schemes is much smaller than that of the low-order schemes. Hence, less computational costs and CPU time are consumed, while for the same grid points, the high-order scheme has almost the same computational cost as the low-order schemes but can achieve higher accuracy. For example, if we set the error to 1.0e-7, from Table \ref{table1}, when $(\alpha,\beta)=(1.1,1.2)$, at each time level, the method in \cite{zhang2022fast} needs to solve linear systems with $255\times255=65025$ unknowns, while our fourth-order scheme only needs to solve linear systems with $63\times63=3969$ unknowns. Meanwhile, in Table \ref{table2}, when $(\alpha,\beta)=(1.4,1.5)$, for the fixed $\Delta t=\frac{1}{10000}$ and $h=\frac{1}{64}$, the error derived by our method is $2.1994{\mbox e}$-$8$, which is more accurate than $6.0668{\mbox e}$-$6$ derived by the second order method in \cite{zhang2022fast}.

Furthermore,
Table \ref{table3} displays the average iterative numbers 
and the CPU time derived by the CG and PCG methods for
solving (\ref{2D-matrix-form}) with different $\alpha$ and $\beta$. 
In Table \ref{table3}, it is shown that, in contrast to the CG method, the average iterative numbers and the CPU time of the PCG method with the aforementioned preconditioners are greatly reduced.
Among the PCG methods, PCG($\tau$) is the most efficient one with an almost constant number of iterations for different $\alpha$ and $\beta$, which 
is in line with the theoretical analysis
that the spectra of the preconditioned matrix with the proposed $\tau$-matrix based preconditioner are uniformly bounded, while the average iteration numbers of the circulant-type preconditioners (PCG(T) and PCG(S)) increase as the discritization grid size decreases. In conclusion,
the proposed $\tau$-matrix based preconditioner outperforms circulant-type preconditioners in terms of both iteration numbers and CPU time.

\begin{table}[!tbp]
\centering
\tabcolsep=5pt
\caption{The comparisons of the errors and convergence orders in time between present method and the method of \cite{zhang2022fast} for the Example at $T=1$.}
\label{table1}
\begin{tabular}{ccccccccc}
\hline
    &\multicolumn{4}{c}{$(\alpha,\beta)=(1.1,1.2)$} &\multicolumn{4}{c}{$(\alpha,\beta)=(1.4,1.5)$}
   \\  \cline{2-5}\cline{6-9}
  $\Delta t=h$ &\multicolumn{2}{c}{Method of \cite{zhang2022fast}}&\multicolumn{2}{c}{Present Method}&\multicolumn{2}{c}{Method of \cite{zhang2022fast}}&\multicolumn{2}{c}{Present Method}\\
   \cline{2-3}\cline{4-5}\cline{6-7}\cline{8-9}
   &${\rm E}(h,\Delta t)$&$R_{\Delta t}$ &${\rm E}(h,\Delta t)$ &$R_{\Delta t}$ &${\rm E}(h,\Delta t)$ &$R_{\Delta t}$ &${\rm E}(h,\Delta t)$ &$R_{\Delta t}$  \\
\hline
         $\frac{1}{64}$    &4.5401e-6        &-      &{\bf2.9347e-7} &-       &5.8729e-6 &-      &2.8832e-7 &-        \\
         $\frac{1}{128}$   &1.1345e-6        &2.0007 &7.4393e-8      &1.9800  &1.4676e-6 &2.0006 &7.3420e-8 &1.9735   \\
         $\frac{1}{256}$   &{\bf2.8360e-7}   &2.0001 &1.8668e-8      &1.9946  &3.6687e-7 &2.0002 &1.8450e-8 &1.9925   \\
         $\frac{1}{512}$   &7.0899e-8        &2.0000 &4.6715e-9      &1.9986  &9.1715e-8 &2.0000 &4.6188e-9 &1.9981    \\
         $\frac{1}{1024}$  &1.7725e-8        &2.0000 &1.1681e-9      &1.9997  &2.2929e-8 &2.0000 &1.1551e-9 &1.9995   \\
\hline
 &\multicolumn{4}{c}{$(\alpha,\beta)=(1.8,1.9)$} &\multicolumn{4}{c}{$(\alpha,\beta)=(1.1,1.9)$}
   \\ \cline{2-5}\cline{6-9}
  $\Delta t=h$ &\multicolumn{2}{c}{Method of \cite{zhang2022fast}}&\multicolumn{2}{c}{Present Method}&\multicolumn{2}{c}{Method of \cite{zhang2022fast}}&\multicolumn{2}{c}{Present Method}\\
   \cline{2-3}\cline{4-5}\cline{6-7}\cline{8-9}
   &${\rm E}(h,\Delta t)$&$R_{\Delta t}$ &${\rm E}(h,\Delta t)$ &$R_{\Delta t}$ &${\rm E}(h,\Delta t)$ &$R_{\Delta t}$ &${\rm E}(h,\Delta t)$ &$R_{\Delta t}$  \\
\hline
         $\frac{1}{64}$    &7.6253e-6 &-      &2.8316e-7 &-       &6.8951e-6 &-      &2.8637e-7 &-        \\
         $\frac{1}{128}$   &1.9319e-6 &1.9808 &7.2500e-8 &1.9656  &1.7514e-6 &1.9771 &7.2801e-8 &1.9759   \\
         $\frac{1}{256}$   &4.8292e-7 &2.0002 &1.8255e-8 &1.9897  &4.3780e-7 &2.0002 &1.8297e-8 &1.9924   \\
         $\frac{1}{512}$   &1.2072e-7 &2.0001 &4.5725e-9 &1.9972  &1.0944e-7 &2.0001 &4.5808e-9 &1.9979    \\
         $\frac{1}{1024}$  &3.0181e-8 &2.0000 &1.1437e-9 &1.9992  &2.7361e-8 &2.0000 &1.1457e-9 &1.9994   \\
\hline
\end{tabular}
\end{table}

\begin{table}[t]%
\centering
\caption{The comparisons of the errors and convergence orders in space between present method and the method of \cite{zhang2022fast} for the Example at $T=1$ with $\Delta t=\frac{1}{10000}$.} 
\label{table2}
\begin{tabular}{ccccccccc}%
 \hline
 &\multicolumn{4}{c}{$(\alpha,\beta)=(1.1,1.2)$} &\multicolumn{4}{c}{$(\alpha,\beta)=(1.4,1.5)$}
   \\  \cline{2-5}\cline{6-9}
  $h$ &\multicolumn{2}{c}{Method of \cite{zhang2022fast}}&\multicolumn{2}{c}{Present Method}&\multicolumn{2}{c}{Method of \cite{zhang2022fast}}&\multicolumn{2}{c}{Present Method}\\
   \cline{2-3}\cline{4-5}\cline{6-7}\cline{8-9}
   &${\rm E}(h,\Delta t)$&$R_{h}$ &${\rm E}(h,\Delta t)$&$R_{h}$ &${\rm E}(h,\Delta t)$&$R_{h}$ &${\rm E}(h,\Delta t)$&$R_{h}$  \\
\hline
         $\frac{1}{8}$    &3.1676e-4        &-      &5.6320e-5          &-       &4.0962e-4      &-      &7.7184e-5      &-        \\
         $\frac{1}{16}$   &7.6527e-5        &2.0494 &3.9238e-6          &3.8433  &9.8302e-5      &2.0590 &5.3423e-6      &3.8528   \\
         $\frac{1}{32}$   &1.8969e-5        &2.0123 &2.5467e-7          &3.9455  &2.4326e-5      &2.0147 &3.4706e-7      &3.9442   \\
         $\frac{1}{64}$   &4.7326e-6        &2.0029 &1.6123e-8          &3.9814  &{\bf6.0668e-6} &2.0035 &{\bf2.1994e-8} &3.9800    \\
         $\frac{1}{128}$  &1.1826e-6        &2.0007 &1.0077e-9          &3.9999  &1.5158e-6      &2.0008 &1.3778e-9      &3.9967   \\
\hline
&\multicolumn{4}{c}{$(\alpha,\beta)=(1.8,1.9)$} &\multicolumn{4}{c}{$(\alpha,\beta)=(1.1,1.9)$}
   \\  \cline{2-5}\cline{6-9}
  $h$ &\multicolumn{2}{c}{Method of \cite{zhang2022fast}}&\multicolumn{2}{c}{Present Method}&\multicolumn{2}{c}{Method of \cite{zhang2022fast}}&\multicolumn{2}{c}{Present Method}\\
   \cline{2-3}\cline{4-5}\cline{6-7}\cline{8-9}
   &${\rm E}(h,\Delta t)$&$R_{h}$ &${\rm E}(h,\Delta t)$&$R_{h}$ &${\rm E}(h,\Delta t)$&$R_{h}$ &${\rm E}(h,\Delta t)$&$R_{h}$  \\ 
\hline
         $\frac{1}{8}$    &5.4258e-4 &-      &1.1096e-4 &-       &4.9713e-4 &-      &1.0421e-4 &-        \\
         $\frac{1}{16}$   &1.2886e-4 &2.0740 &7.5501e-6 &3.8774  &1.1708e-4 &2.0861 &6.7998e-6 &3.9378   \\
         $\frac{1}{32}$   &3.1806e-5 &2.0184 &4.8974e-7 &3.9464  &2.8860e-5 &2.0204 &4.3457e-7 &3.9678   \\
         $\frac{1}{64}$   &7.9263e-6 &2.0046 &3.1020e-8 &3.9808  &7.1899e-6 &2.0050 &2.7335e-8 &3.9908    \\
         $\frac{1}{128}$  &1.9800e-6 &2.0011 &1.9452e-9 &3.9952  &1.7959e-6 &2.0013 &1.7086e-9 &3.9999   \\
\hline
\end{tabular}
\end{table}

\begin{table}[t]%
\centering
\caption{The comparisons between CG and PCG methods with the aforementioned preconditioners for solving the Example at $T=1$ with $N={N_x+1}={N_y+1}$.} 
\label{table3}
\begin{tabular}{ccccccccccccccccccc}
\hline
\multirow{2}{*}{$(\alpha,\beta)$} & \multirow{2}{*}{$M$}  & \multirow{2}{*}{$N$}  &  \multicolumn{2}{c}{CG} &  \multicolumn{2}{c}{PCG(T)} & \multicolumn{2}{c}{PCG(S)}  & \multicolumn{2}{c}{PCG($\tau$)} \\  \cline{4-5} \cline{6-7} \cline{8-9}\cline{10-11}
     &    &  &CPU(s)  &Iter &CPU(s)  &Iter &CPU(s)  &Iter &CPU(s)  &Iter  \\
\hline
\multirow{4}{*}{(1.1, 1.2)}
         &$2^{3}$   &$2^{8}$   &10.83         &192.00    &2.36      &32.00    &1.76      &22.00    &1.42    &10.00  \\
         &$2^{4}$   &$2^{9}$   &154.87        &285.00    &27.43     &37.00    &19.70     &25.50    &14.17   &11.00  \\
         &$2^{5}$   &$2^{10}$  &1660.70       &407.00    &221.98    &41.00    &151.00    &26.94    &109.66  &11.00   \\
         &$2^{6}$   &$2^{11}$  &20563.14      &556.98    &2214.01   &44.00    &1545.68   &29.91    &893.89  &11.00   \\
\hline
\multirow{4}{*}{(1.4, 1.5)}
         &$2^{3}$   &$2^{8}$   &13.66          &255.00    &3.60      &50.88    &2.32      &30.75    &1.38    &10.00  \\
         &$2^{4}$   &$2^{9}$   &271.53         &511.00    &46.62     &65.38    &26.84     &37.50    &11.78   &10.00  \\
         &$2^{5}$   &$2^{10}$  &3660.02        &904.00    &457.12    &85.03    &230.48    &42.94    &91.48   &10.00    \\
         &$2^{6}$   &$2^{11}$  &$\dagger$      &$>$1000   &5282.73   &108.20   &2407.43   &48.00    &820.12  &10.00    \\
\hline
\multirow{4}{*}{(1.8, 1.9)}
         &$2^{3}$   &$2^{8}$   &13.60          &239.88    &6.65      &97.50     &3.01     &42.63    &1.03    &7.00  \\
         &$2^{4}$   &$2^{9}$   &268.98         &492.69    &107.37    &156.44    &37.28    &53.25    &8.70    &7.00  \\
         &$2^{5}$   &$2^{10}$  &$\dagger$        &$>$1000   &1299.36   &254.25    &358.68   &68.25    &74.00   &8.00    \\
         &$2^{6}$   &$2^{11}$  &$\dagger$      &$>$1000   &19917.05  &412.91    &4274.96  &86.33    &680.23  &8.00   \\   
\hline         
 \multirow{4}{*}{(1.1, 1.9)}
         &$2^{3}$   &$2^{8}$   &13.80         &249.50    &7.86       &113.00    &3.62     &51.88    &1.14    &8.00  \\
         &$2^{4}$   &$2^{9}$   &272.60        &495.56    &113.81     &169.00    &45.46    &65.81    &9.80    &8.00 \\
         &$2^{5}$   &$2^{10}$  &4150.15       &982.81    &1249.75    &238.31    &428.63   &82.63    &81.89   &9.00  \\
         &$2^{6}$   &$2^{11}$  &$\dagger$     &$>$1000   &16404.45   &332.00    &4967.90  &100.58   &747.87  &9.00   \\           
\hline
\end{tabular}
\end{table}
\section{Concluding remarks}

 In this paper, we have proposed a novel fourth-order symmetric scheme for the 2D RSFNRDEs, and analyzed the stability and convergence of the proposed scheme under the Lipschitz condition for the nonlinear term. Moreover, a $\tau$-matrix based preconditioner is also developed to efficiently solve the symmetric linear system arising from the 2D RSFNRDEs. In theory, all eigenvalues of 
 the preconditioned matrices are proved to be strictly greater than $3/8$ and less than $2$,
 which can guarantee that the PCG method converges linearly and the number of iterations is independent of the mesh sizes.
 Numerical examples are given to validate the correctness of the scheme and the effectiveness of the proposed preconditioner. As a basis for comparison, we compare the proposed $\tau$-matrix based preconditioner with two commonly used circulant preconditioners. Numerical results show that the proposed $\tau$-matrix based preconditioner outperforms circulant-type preconditioners in terms of less CPU time and iteration numbers. It is worth noting that our work is the first attempt to develop a preconditioned iterative solver in the literature with a mesh-independent convergence rate for the linearized high-order scheme. In our future work, the preconditioning strategy for the higher-dimensional case will be studied.

\end{document}